\begin{document}

\newtheorem{theorem}{Theorem}[section]
\newtheorem{lemma}[theorem]{Lemma}
\newtheorem{corollary}[theorem]{Corollary}
\newtheorem{conjecture}[theorem]{Conjecture}
\newtheorem{cor}[theorem]{Corollary}
\newtheorem{proposition}[theorem]{Proposition}
\theoremstyle{definition}
\newtheorem{definition}[theorem]{Definition}
\newtheorem{example}[theorem]{Example}
\newtheorem{claim}[theorem]{Claim}
\newtheorem{remark}[theorem]{Remark}

\newenvironment{pfofthm}[1]
{\par\vskip2\parsep\noindent{\sc Proof of\ #1. }}{{\hfill
$\Box$}
\par\vskip2\parsep}
\newenvironment{pfoflem}[1]
{\par\vskip2\parsep\noindent{\sc Proof of Lemma\ #1. }}{{\hfill
$\Box$}
\par\vskip2\parsep}


\newcommand{\R}{\mathbb{R}}
\newcommand{\T}{\mathcal{T}}
\newcommand{\C}{\mathcal{C}}
\newcommand{\G}{\mathcal{G}}
\newcommand{\Z}{\mathbb{Z}}
\newcommand{\Q}{\mathbb{Q}}
\newcommand{\E}{\mathbb E}
\newcommand{\N}{\mathbb N}

\newcommand{\barray}{\begin{eqnarray*}}
\newcommand{\earray}{\end{eqnarray*}}

\newcommand{\beq}{\begin{equation}}
\newcommand{\eeq}{\end{equation}}


\renewcommand{\Pr}{\mathbb{P}}
\newcommand{\as}{\text{a.s.}}
\newcommand{\Prob}{\Pr}
\newcommand{\Exp}{\mathbb{E}}
\newcommand{\expect}{\Exp}
\newcommand{\1}{\mathbf{1}}
\newcommand{\prob}{\Pr}
\newcommand{\pr}{\Pr}
\newcommand{\filt}{\mathcal{F}}
\newcommand{\F}{\mathcal{F}}
\newcommand{\Bernoulli}{\operatorname{Bernoulli}}
\newcommand{\Binomial}{\operatorname{Binom}}
\newcommand{\Beta}{\operatorname{Beta}}
\newcommand{\Binom}{\Binomial}
\newcommand{\Poisson}{\operatorname{Poisson}}
\newcommand{\Exponential}{\operatorname{Exp}}


\newcommand{\link}{\mbox{lk}}
\newcommand{\Deg}{\operatorname{deg}}
\newcommand{\vertexsetof}[1]{V\left({#1}\right)}
\renewcommand{\deg}{\Deg}
\newcommand{\oneE}[2]{\mathbf{1}_{#1 \leftrightarrow #2}}
\newcommand{\ebetween}[2]{{#1} \leftrightarrow {#2}}
\newcommand{\noebetween}[2]{{#1} \centernot{\leftrightarrow} {#2}}
\newcommand{\Gap}{\ensuremath{\tilde \lambda_2 \vee |\tilde \lambda_n|}}
\newcommand{\dset}[2]{\ensuremath{ e({#1},{#2})}}
\newcommand{\EL}{{ L}}
\newcommand{\ER}{{Erd\H{o}s--R\'{e}nyi }}
\newcommand{\zuk}{{\.{Z}uk}}


\newcommand{\frm}{\ensuremath{ 2\log\log m}}
\newcommand{\csubzero}{c_{0}}
\newcommand{\csubone}{c_{1}}
\newcommand{\csubtwo}{c_{2}}
\newcommand{\csubthree}{c_{3}}
\newcommand{\csubstar}{c_{*}}
\newcommand{\INE}{I^{\epsilon}}
\newcommand{\rsp}{1-C\exp(-md^{1/4}\log n)}
\newcommand{\lc}{\ensuremath{ \operatorname{light}(x,y)}}
\newcommand{\hc}{\ensuremath{ \operatorname{heavy}(x,y)}}

\title[Preferential attachment with choice]{Preferential attachment combined with the random number of choices}
\author{Yury Malyshkin}
\address{Tver State University}
\email{yury.malyshkin@mail.ru}
\thanks{The presented work was performed under the State Assignment N3.8032.2017/BCh of the Ministry of Science and Education of Russia. 
}
\subjclass[2010]{05C80}
\keywords{Preferential Attachment, Random Graphs}
\date{\today}
\maketitle

\begin{abstract}
We study the asymptotic behavior of the maximal degree in the degree distribution in an evolving tree model combining local choice and Mori's preferential attachment. In the considered model, the random graph is constructed in the following way. At each step, a new vertex is introduced. 
Then, we connect it with one of $d$ possible neighbors, which are sampled from the set of the existing vertices with probability proportional to their degrees plus some parameter $\beta>-1$. The number $d$ will be randomly selected and the vertex with the largest degree is chosen. It is known that the maximum of the degree distribution for non-random $d>2$ has linear behavior and, for $d=2$, asymptotically equals to $n/\ln n$ up to a constant factor. We prove that if $\mathbb{E}d<2+\beta$, the maximal degree has sublinear behavior with the power $\mathbb{E}d/(2+\beta)$ (as in the preferential attachment without choice), if $\mathbb{E}d>2+\beta$, it has linear behavior and if $\mathbb{E}d=2+\beta$ the maximal degree is of order $n/\ln n$. The proof combines standard preferential attachment approaches with martingales and stochastic approximation techniques. We also use stochastic approximation results to get a multidimensional central limit theorem for numbers of vertices with fixed degrees.
\end{abstract}

\section{Introduction}
In the present work, we further explore how the addition of choice (see, e.g., \cite{DSKrM,KR13,MP14,MP13}) affects Mori's preferential attachment model (see \cite{Mori03,Mori05}). The preferential attachment graph model is a time-indexed inductively constructed sequence of graphs, formed in the following way. We start with some initial graph and then on each step we add a new vertex and an edge between it and one of the old vertices, chosen with probability proportional to its degree. The main reasoning for this model is that it allows us to build evolving graph that has power law degree distribution and small diameter (see, e.g., \cite{BR03,DvdHH,CGH17} and chapter 8 of \cite{Hof13}). 

There are different variations of preferential attachment model, in which additional parameters are introduced into the model. For example, in \cite{Mori03} Mori consider a model where each new vertex is chosen with probability proportional to its degree plus some parameter $\beta>-1$. A further generalization of it would be to choose a vertex with probability proportional to some function $w(\cdot)$ of its degree. Such model was considered in \cite{Athreya08}, where asymptotic was found for sublinear ($w(x)\sim x^p,$ $1/2<p<1$) and superlinear ($\sum_{n=1}^{\infty}\frac{1}{w(n)} < \infty$) cases. In both cases, the degree distribution does not follow a power law. Another way to generalize this model was considered in \cite{BCDR07}, where authors analyzed models with fitness, in which the vertex is chosen with probability proportional to the product of its degree and its fitness.
 
In~\cite{MP13}, limited choice was introduced into the preferential attachment model. In this modified model, at each step, $d$ existing vertices are chosen (independently from each other) with probabilities proportional to their degrees. At the same step, a new vertex is connected with the one with the smallest degree. In~\cite{MP13}, it was shown that the maximal degree at time $n$ in such a model grows as $\ln\ln n/\ln d$ with high probability. 

We follow the max-choice model that was introduced in \cite{MP14}. In this model a new edge is adjacent to the vertex with the highest degree. In~\cite{MP14}, the exact first-order asymptotic for the maximal degree in this model was obtained and almost sure convergence of the appropriately scaled maximal degree was shown. In particular, for $d>2$ the maximal degree had linear behavior, and for $d=2$ it was of order $n/\ln n$. Furthermore, in \cite{M17} it was shown that for $d>2$ the second (along with third and so on) highest degree has sublinear behavior, which is similar to standard preferential attachment model (see, e.g., \cite{FFF04}). In the current work, we provide asymptotic for the maximal degree in the case of random $d$ for Mori's preferential attachment model. We fix the distribution of $d$ and number $\beta>0$, which are parameters of our model, and show that the maximal degree could be asymptotically linear (when $\E d>2+\beta$), sublinear (when $\E d<2+\beta$) or of order $n/\ln n$ (when $\E d=2+\beta$). In particular, in Mori's preferential attachment model (see \cite{Mori05}), $d=1$ so $\E d$ is always less than $2+\beta$ (since $\beta>-1$) and maximal degree has sublinear behavior (which corresponds to a power law degree distribution). The addition of parameters $d$ and $\beta$ allow us to vary the model (and the limiting empirical degree distribution).

Let describe the max-choice model. Fix $\beta>-1$. Introduce a countable non-random set of vertices $V=\{v_i,\;i\in\mathbb{N}\}$ and a random variable $d$ that takes values in $\mathbb{N}$. Let $d_i$, $i\in\mathbb{N}$, be i.i.d. copies of $d$. Define a sequence of random trees $\{ P_n \}$, $n\in\mathbb{Z}_{+}$, by the following inductive rule. Let $P_0$ be graph which consists of a vertex $v_{1}$ with no edges and $P_1$ be the one-edge tree which consists of vertices $v_1$ and $v_2$ and an edge between them. Given $P_{n}$, we construct $P_{n+1}$ by adding one vertex and drawing one edge in the following way.

First, we add a vertex $v_{n+2}$ to $P_n$. So, we get $V(P_{n+1})=\{v_i,\;i=1,...,n+2\}$. Denote $\mathcal{F}_n=\sigma\{E(P_1),...,E(P_n)\}$, where $E(P_i)$ is the edge set of $P_i$, $i=1,...,n$. Let $X^1_n,\ldots,X^{d_n}_n$ be i.i.d. vertices of $V( P_n )$ chosen with the conditional probability
\[
\Pr \left[
X^1_n = v_i|\mathcal{F}_{n}
\right] = \frac{\deg v_i(n)+\beta}{(2+\beta)n+\beta},\quad v_i\in V(P_n),
\]
where $\deg v_i(n)$ is the degree of $v_i$ in $P_n$ (note that $\sum_{v_i}\deg v_i(n)=2n$).

Second, create a new edge between $v_{n+2}$ and $Y_n,$ where $Y_n$ is whichever of $X^1_n$,...,$X^{d_n}_n$ has the largest degree. In the case of a tie, choose according to an independent fair coin toss (this choice will not affect the degree distribution). So, we get $E(P_{n+1})=E(P_n)\bigcup\{\{v_{n+2},Y_n\}\}.$ 

We call this model the \emph{max-choice Mori's preferential attachment tree model}.
Let $M(n)$ be the degree of the highest degree vertex at time $n$ and define $m_j=\sum_{i=j}^{\infty}\mathbb{P}(d=i){i \choose j}$. The main result, that will be formulated below, holds under a certain condition on the distribution of $d$, specifically, $m_j<C^j$ for some constant $C>0$ and all $j\in\mathbb{N}$. Note that this condition holds for the Poisson distribution and any finite distribution.

Let us formulate our main theorem:
\begin{theorem}
\label{thm:max_degree}
Let $m_j<C^j$ for all $j\in\mathbb{N}$ and some constant $C>0$.

If $\mathbb{E}d<2+\beta$, then for any $\epsilon>0$
\[
\lim_{n\to\infty} \mathbb{P}(n^{\mathbb{E}d/(2+\beta)-\epsilon}<M(n)<n^{\mathbb{E}d/(2+\beta)+\epsilon}) = 1.
\]

If $\mathbb{E}d>2+\beta$, then
\[
\frac{M(n)}{n}\to x^{\ast}\;\text{a.s.}
\]
where $x^{\ast}$ is the unique positive solution of the equation $$\sum_{i=1}^{\infty}\mathbb{P}(d=i)\left(1-\left(1-x/2\right)^i\right)=x$$
in the interval $0\leq x\leq 1$.

If $\mathbb{E}d=2+\beta$, then
\[
\frac{M(n)\ln n}{n}\to\frac{(2+\beta)^2}{m_2}\text{a.s.}
\]
\end{theorem}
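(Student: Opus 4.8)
The plan is to control $M(n)$ through the probability with which a vertex captures the new edge at a given step. Write $D_v(n)=\deg v(n)$, let $p_n(k)=\frac{k+\beta}{(2+\beta)n+\beta}$ be the probability that one sample at step $n$ hits a fixed degree-$k$ vertex, and set $\Phi_n(k)=\mathbb{E}[1-(1-p_n(k))^{d_n}]=\sum_{j\ge1}(-1)^{j+1}m_j p_n(k)^j$, the probability that such a vertex is sampled at least once among the $d_n$ choices. The hypothesis $m_j<C^j$ makes $\Phi_n$ real-analytic for $p_n(k)<1/C$, with $\Phi_n(k)=m_1 p_n(k)-m_2 p_n(k)^2+O(p_n(k)^3)$ uniformly and $m_1=\mathbb{E}d$. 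Everything rests on two observations: a vertex can get the new edge only if it is sampled, so $\mathbb{E}[D_v(n+1)-D_v(n)\mid\mathcal F_n]\le\Phi_n(D_v(n))$ for every $v$, with equality when $v$ is the strict maximiser; and $M(n)$ increases by one exactly when at least one current maximiser is sampled, so $\mathbb{E}[M(n+1)-M(n)\mid\mathcal F_n]\ge\Phi_n(M(n))$.

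\textbf{Upper bounds.} Since $\Phi_n(k)\le m_1 p_n(k)$ by Bernoulli's inequality, $(D_v(n)+\beta)\big/\prod_m(1+\frac{m_1}{(2+\beta)m+\beta})$ is a non-negative supermartingale for each $v$, and the product is of order $n^{\mathbb{E}d/(2+\beta)}$. Combining this with exponential tail bounds for the associated P\'olya-type urn and a union bound over the at most $n+1$ vertices --- the classical route to the maximal degree in preferential attachment --- gives $M(n)\le n^{\mathbb{E}d/(2+\beta)+\epsilon}$ with high probability when $\mathbb{E}d<2+\beta$. When $\mathbb{E}d=2+\beta$ the first-order term is exactly $D_v(n)/n$ up to lower order, so the recursion sharpens to $\mathbb{E}[D_v(n+1)-D_v(n)\mid\mathcal F_n]\le\frac{D_v(n)}{n}-\frac{m_2}{(2+\beta)^2}\frac{D_v(n)^2}{n^2}+\cdots$; writing $g=D_v(n)/n$, this is driven by $\dot g=-\frac{m_2}{(2+\beta)^2}g^2$ in the time variable $\log n$, which integrates to $g\sim\frac{(2+\beta)^2}{m_2\log n}$, whence $\limsup_n M(n)\log n/n\le(2+\beta)^2/m_2$ after the same upgrade.

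\textbf{Lower bounds.} From the second observation one couples $M(n)$ from below with a pure-birth chain $Z$ having step probability $\Phi_n(Z(n))$ (legitimate because extra maximisers only help $M$ rise), and $Z(n)$ is again of order $n^{\mathbb{E}d/(2+\beta)}$, giving $M(n)\ge n^{\mathbb{E}d/(2+\beta)-\epsilon}$ and finishing the subcritical case. In the supercritical case $\mathbb{E}d>2+\beta$ one first shows that a.s.\ $M(n)\ge\delta n$ for some $n$, for every fixed small $\delta$: otherwise $(M(n)+\beta)\big/\prod_m(1+\frac{c}{(2+\beta)m+\beta})$ with $c=\frac{\mathbb{E}d}{2+\beta}(1-\epsilon)>1$ would be a non-negative submartingale, impossible since $M(n)\le n$ and $c>1$ force the normalised quantity to $0$. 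Once $M(n)/n$ lies in the basin of $x^{\ast}$, the recursion for $M(n)/n$ has drift $\asymp\frac1n(\bar\Phi(M(n)/n)-M(n)/n)$, where the limiting capture function $\bar\Phi$ is concave and $\bar\Phi(x)-x$ vanishes only at $0$ and $x^{\ast}$ and is negative beyond $x^{\ast}$, so a stochastic-approximation theorem yields $M(n)/n\to x^{\ast}$ a.s. In the critical case the matching bound $\liminf_n M(n)\log n/n\ge(2+\beta)^2/m_2$ comes from the same ODE, provided the leader's per-step increment is $\Phi_n(M(n))$ and not strictly more --- that is, provided the maximum is eventually attained at a single vertex.

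\textbf{The main obstacle.} The delicate ingredient, on which the sharp constant $(2+\beta)^2/m_2$ and the sharp limit $x^{\ast}$ hinge, is a persistence (``persistent hub'') statement: almost surely the maximiser eventually stabilises at one vertex that thereafter remains strictly ahead, so that tie-breaking corrections are negligible in aggregate. This is not needed in the subcritical regime, where the exponent is insensitive to bounded tie multiplicities. Establishing it calls for a quantitative bound showing the second-largest degree is of strictly smaller order than $M(n)$, together with summability of overtaking probabilities, along the lines of \cite{MP14,M17}; and in the supercritical case one must in addition make the stochastic-approximation step rigorous, in particular rule out attraction to the unstable equilibrium at the origin, which is precisely what the submartingale argument above handles.
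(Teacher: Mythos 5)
Your architecture matches the paper's: the capture probability $\Phi_n(k)=\sum_{j\ge 1}(-1)^{j+1}m_j p_n(k)^j$, supermartingale/submartingale comparisons to pin the exponent $\mathbb{E}d/(2+\beta)$, a persistence (single-leader) statement to justify replacing $M(n)$'s increment by $\Phi_n(M(n))$ with $L(n)=1$, stochastic approximation for the supercritical limit $x^{\ast}$, and an ODE heuristic $\dot g=-\tfrac{m_2}{(2+\beta)^2}g^2$ that the paper makes rigorous with the scaling supermartingales $Q^c_n=\exp(cn/M(n))/n$ and $U^c_n=n\exp(-cn/M(n))$. So this is the same route, just sketched rather than executed.

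There is, however, a genuine arithmetic gap in the escape-from-zero step. You set $c=\tfrac{\mathbb{E}d}{2+\beta}(1-\epsilon)>1$, claim $(M(n)+\beta)\big/\prod_m\left(1+\tfrac{c}{(2+\beta)m+\beta}\right)$ is a nonnegative submartingale, and then conclude it must tend to $0$ because $M(n)\le n$ and ``$c>1$''. But $\prod_m\left(1+\tfrac{c}{(2+\beta)m+\beta}\right)\asymp n^{c/(2+\beta)}$ (this is exactly Lemma~\ref{lem:numbers} with $\alpha=c/(2+\beta)$), so the normalised quantity is $O(n^{1-c/(2+\beta)})$; this goes to $0$ only when $c>2+\beta$, not when $c>1$. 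With your choice of $c$ that amounts to demanding $\mathbb{E}d>(2+\beta)^2$, which is a strictly stronger hypothesis than $\mathbb{E}d>2+\beta$. The fix is the one the paper uses in Lemma~\ref{lem:interval_est}: you should take a normalising product that grows like $n^{1+\delta}$ with $\delta>0$, i.e.\ set the relative growth rate to $(1+\delta)/n$ with $1+\delta$ any number strictly between $1$ and $g(0)=\mathbb{E}d/(2+\beta)$; equivalently, in your notation, you want $c/(2+\beta)\in(1,\mathbb{E}d/(2+\beta))$, i.e.\ $c\in(2+\beta,\mathbb{E}d)$, not $c=\mathbb{E}d(1-\epsilon)/(2+\beta)$. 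And the cleaner contradiction is obtained by working with the reciprocal $C_n/M(n)$ as a stopped supermartingale (Doob then gives $M(n)\ge Bn^{1+\delta}$ on the relevant event, contradicting $M(n)\le 2n$), rather than trying to extract a contradiction from a submartingale tending to $0$, which on its own is not impossible.

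Two smaller remarks. First, for the subcritical upper bound you propose a union bound over all vertices with exponential tail estimates for a P\'olya-type urn; the paper instead uses the persistent hub to reduce to a single one-dimensional supermartingale $M(n)/n^{\mathbb{E}d/(2+\beta)+\epsilon/2}$, stopped at $\eta_{n_0}$ (the first loss of leader uniqueness) and at $\nu_{n_0,\epsilon}$, which is more economical and avoids having to prove concentration for individual vertex degrees in the presence of choice. Your route should work but requires a moment-generating-function analysis that you have not supplied. Second, you describe the persistent hub proof as ``a quantitative bound showing the second-largest degree is of strictly smaller order,'' but the paper's proof (Lemmas~\ref{lem:coupling}--\ref{lem:change_of_leadership}) does not compare orders; it couples the pair of degree processes with a two-dimensional P\'olya walk from above on the minimum coordinate and then uses that the urn fraction has a non-atomic limit, together with the a priori lower bound $M(n)\gtrsim n^{\gamma}$ from Lemma~\ref{lem:starting_low_bound} and Borel--Cantelli via Galashin's estimate $q(i,1)\le Q(i)2^{-i}$. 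These are different (and the paper's is considerably lighter), so the suggestion as written would still leave the persistence step essentially unproved.
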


Our proof is based on the existence of the \emph{persistent hub}, i.e. a single vertex that in some finite random time becomes the highest degree vertex for all time after (see \cite{DM09} for more details). Using this, instead of analyzing the highest degree over all vertices we effectively only need to analyze the degree of just one vertex. The existence of the persistent hub is stated in the following result. 

\begin{proposition}
\label{prop:persistent_hub}
There exist random variables $N$ and $K$, that are finite almost surely, such that at any time $n \geq N$, $\deg v_{K}(n)=M(n)$ and $M(n)>\deg v_{i}(n)$ for any $i\neq K$.
\end{proposition}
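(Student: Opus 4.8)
\emph{Reduction.} The plan is to reduce the proposition to the single assertion that the gap between the largest and second-largest degree tends to infinity, and then to obtain that assertion from the degree-evolution estimates underlying Theorem~\ref{thm:max_degree}. Each step raises exactly one existing degree by $1$ and adds a degree-$1$ vertex, so $M(n)$ is nondecreasing; moreover $M(n)\to\infty$ a.s.\ by L\'evy's conditional Borel--Cantelli lemma applied to the events $\{v_2\text{ is chosen at step }n+1\}$, whose $\F_n$-conditional probabilities are at least $(\deg v_2(n)+\beta)/((2+\beta)n+\beta)\ge(1+\beta)/((2+\beta)n+\beta)$, a divergent series. Let $L(n)=M(n)-\max\{\deg v_i(n):\deg v_i(n)<M(n)\}$ be the current lead, with $L(n)=M(n)$ if no such $v_i$ exists; thus $L(n)\ge1$ exactly when the maximiser is unique. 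If $L(n)\ge1$ for every $n\ge N$ then the maximiser cannot change after $N$ — a change of maximiser forces a tie $L=0$ at some intermediate step, since degrees move by $1$. Hence it suffices to prove $L(n)\to\infty$ a.s.

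\emph{Local stability.} The key structural input is a comparison of attachment rates. Writing $\pi_v(m)=(\deg v(m)+\beta)/((2+\beta)m+\beta)$, letting $u$ denote the leader and $Z^w_m=\deg u(m)-\deg w(m)$ for $w\ne u$ (which moves by $\pm1$ or $0$, and cannot rise at both ends at once): while $u$ is the strict maximum it is the arg-max whenever it is sampled, so $\Pr(Z^w\uparrow\mid\F_m)=\E[1-(1-\pi_u(m))^{d_m}]$, whereas $\Pr(Z^w\downarrow\mid\F_m)\le\E[1-(1-\pi_w(m))^{d_m}]$; since $\E d=m_1<C<\infty$ and $x\mapsto1-(1-x)^k$ is increasing, the drift of $Z^w$ is at least $\E[(1-\pi_w(m))^{d_m}-(1-\pi_u(m))^{d_m}]$, which is $\ge0$ while $u$ leads and, for small $\pi$'s, is $\simeq(\E d)\,Z^w_m/((2+\beta)m)$ — positive and proportional to the gap itself. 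Nonnegative drift alone is not enough (a lazy symmetric walk returns to $0$), but once $Z^w_m$ is a fixed fraction of $\deg u(m)$ the ratio $\Pr(Z^w\uparrow)/\Pr(Z^w\downarrow)$ exceeds some $r>1$ depending only on that fraction, so $r^{-Z^w_m}$ is a supermartingale until the domination is lost, and Doob's maximal inequality bounds the probability that $Z^w$ ever reaches $0$ by a quantity geometrically small in $Z^w_m$. Only $O(m/M(m))$ vertices $w$ have degree within a constant factor of $M(m)$ (degree sum $=2m$), the remaining ones contributing $e^{-\Omega(M(m))}$; summing these estimates shows that a leader which has once acquired a degree at least $D$ together with a wide enough multiplicative margin over the whole field stays the strict maximum forever with probability $1-o_D(1)$.

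\emph{Main obstacle.} It remains — and this is the crux — to show that such a configuration is reached almost surely, i.e.\ that $L(n)\to\infty$ a.s.; then letting $D\to\infty$ along the times at which these configurations occur (the relevant events being increasing in $D$) completes the proof. The difficulty is that while $L(n)$ is only a small fraction of $M(n)$ the gap process is essentially an unbiased lazy walk — its one-step drift $\sim(\E d)L(n)/((2+\beta)n)$ is dominated by its one-step fluctuation $\sim(\E d)M(n)/((2+\beta)n)$ — so on short scales the leader can be overtaken, and one must show the small but $L$-proportional drift prevails over geometrically long windows. I would extract this from the quantitative growth estimates developed for Theorem~\ref{thm:max_degree}: a lower bound showing that $M(n)$ outgrows $n^{1/(2+\beta)}$ (the boost the max-choice rule confers on the current leader) together with the stochastic-approximation control of the empirical degree counts $N_k(n)/n$, which keeps the number of vertices of degree comparable to $M(n)$ bounded, so that past a finite time the leader is isolated well above the field; fed into the supermartingale comparison above over the windows $[2^k,2^{k+1})$, this forces $L(n)\to\infty$. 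This is where one follows, in spirit, the persistent-hub argument of Dereich--M\"orters \cite{DM09}, the exact degree-martingale available without choice being replaced by the two-sided rate comparison above.
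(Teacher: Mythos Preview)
Your write-up is honest about where the difficulty lies, but the ``main obstacle'' paragraph does not close the gap, and the route you propose has two problems. First, the sharper growth estimates for $M(n)$ that feed into Theorem~\ref{thm:max_degree} are proved in the paper \emph{after} and \emph{using} Proposition~\ref{prop:persistent_hub} (the reduction to a single leader is precisely what lets one pass from \eqref{p_n} to $p_n=f((M(n)+\beta)/(n+\beta/(2+\beta)))$), so invoking them here is circular; the only non-circular input available at this point is the crude a priori bound $M(n)\gtrsim n^{1/(4(2+\beta))}$ of Lemma~\ref{lem:starting_low_bound}. Second, the stochastic-approximation control of $N_k(n)/n$ concerns \emph{fixed} $k$ and says nothing about how many vertices have degree comparable to $M(n)\to\infty$, so it cannot isolate the leader from the field. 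There is also a smaller issue in your local-stability step: the supermartingale $r^{-Z^w}$ is valid only while $Z^w$ stays above a fixed fraction of $\deg u$, so Doob's inequality bounds exit from that multiplicative cone, not the event $\{Z^w\text{ hits }0\}$; an additional iteration layer would be required.

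The paper proceeds quite differently and never needs to produce a wide margin explicitly. It splits the statement into (i) only finitely many vertices ever attain the maximum (Lemma~\ref{lem:change_of_leader}) and (ii) among that finite set, ties occur only finitely often (Lemma~\ref{lem:change_of_leadership}). The engine for both is a coupling with a P\'olya urn (Lemma~\ref{lem:coupling}): a convexity argument for $h_m(x)=\sum_k\Pr(d=k)\,(x/((2+\beta)m+\beta))^k$ yields $p^i_m/p^j_m\ge(\deg v_i(m)+\beta)/(\deg v_j(m)+\beta)$ whenever $\deg v_i(m)>\deg v_j(m)$, so the pair of degrees, watched at its own increment times, is dominated in the minimum coordinate by a standard P\'olya urn $T_n=(A_n,B_n)$. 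For (i), the a priori bound $M(n)\gtrsim n^{\gamma}$ together with Galashin's estimate $q(i,1)\le Q(i)/2^{i}$ for the urn catch-up probability gives a Borel--Cantelli--summable series over all newly arriving vertices. For (ii), the urn ratio $A_n/(A_n+B_n)$ converges a.s.\ to a $\Beta$ random variable, which is almost surely not $1/2$, so equalities happen only finitely often. This P\'olya-urn comparison is the missing idea in your sketch.
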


The purpose of this proposition is to simplify analysis of the dynamics of $M(n)$.
Indeed, let $L(n)$ be the number of vertices at time $n$ that has the degree equal to $M(n)$. The effect of Proposition~\ref{prop:persistent_hub} is that for some random and sufficiently large $N< \infty$, $L(n) = 1$ for all $n \geq N.$ The dynamics of $M(n)$ is given by the formula
$$M(n+1)-M(n) = \1\{\deg Y_{n+1}(n)=M(n)\},\;\text{with}$$

\begin{equation}
\label{p_n}
\begin{gathered}
\mathbb{E}(M(n+1)-M(n)|\mathcal{F}_n)\\
=\sum_{i=1}^{\infty}\mathbb{P}(d_{n+1}=i)\left(1 - \left(1-\frac{(M(n)+\beta)L(n)}{(2+\beta)n+\beta}\right)^{i}\right).
\end{gathered}
\end{equation}
From here, we will refer to $\mathbb{E}(M(n+1)-M(n)|\mathcal{F}_n)$ as $p_{n}$. Note that cause $M(n+1)-M(n)$ could only take values $0$ and $1$, $p_{n}$ equals to the probability to increase maximal degree at the $n$-{th} step conditionally on $\mathcal{F}_{n}$.

Before starting the proof, let us describe its main ideas and the structure of the paper. In Section 2, we provide an initial estimate on $M(n)$ using a martingale technique. In Section 3, we use this estimate along with stochastic domination by a linear urn model to prove the existence of a persistent hub and, so, prove Proposition~\ref{prop:persistent_hub}. In Section 4, we use it along with stochastic approximation processes to get almost sure convergence of $M(n)/n$. Then we construct martingales and use convergence theorems for them to prove Theorem~\ref{thm:max_degree}. In Section 5, we apply results on a stochastic approximation and get a central limit theorem for the number of vertices with a fixed degree. The statement of the respective result requires some additional heavy notation and so we state this theorem only in Section 5.   
Finally, in Section 6, we discuss the degree distribution and variations of the model.

\section{Initial estimate}
\label{sec:apriori}
We begin with a lower bound for the growth of the highest degree.
We will frequently use the following lemma of~\cite{Galashin}.
\begin{lemma}
\label{lem:numbers}
Suppose that a sequence of positive numbers $r_n$ satisfies
\[
r_{n+1} = r_n\left(1+\frac{\alpha}{n+x}\right),~n \geq k
\]
for fixed $\alpha > 0,$ $k > 0$ and $x.$  Then $r_n/n^{\alpha}$ has a positive limit.
\end{lemma}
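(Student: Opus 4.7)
The plan is to convert the multiplicative recurrence into a closed-form telescoping product and then extract the $n^\alpha$ growth via a standard Gamma-function asymptotic. Rewriting the recurrence as
$$r_{n+1}=r_n\cdot\frac{n+x+\alpha}{n+x},$$
iteration from $n=k$ gives $r_n = r_k\prod_{j=k}^{n-1}(j+x+\alpha)/(j+x)$, and this product telescopes in terms of Gamma values:
$$\prod_{j=k}^{n-1}\frac{j+x+\alpha}{j+x}\;=\;\frac{\Gamma(n+x+\alpha)\,\Gamma(k+x)}{\Gamma(n+x)\,\Gamma(k+x+\alpha)}.$$
(One may increase $k$ if necessary so that $k+x>0$ and all Gamma arguments are positive; only the tail matters.)

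The second step is to apply Stirling's standard ratio asymptotic $\Gamma(z+\alpha)/\Gamma(z)\sim z^\alpha$ as $z\to\infty$. This yields $\Gamma(n+x+\alpha)/\Gamma(n+x)=n^\alpha(1+o(1))$, and therefore
$$\frac{r_n}{n^\alpha}\;\longrightarrow\;r_k\cdot\frac{\Gamma(k+x)}{\Gamma(k+x+\alpha)},$$
which is strictly positive since $r_k>0$ and $\Gamma$ is positive on the relevant arguments.

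A more elementary variant, if one prefers to avoid Gamma, is to take logarithms and use the expansion $\ln\bigl(1+\alpha/(j+x)\bigr)=\alpha/(j+x)+O(j^{-2})$. The $O(j^{-2})$ contributions sum absolutely, while $\sum_{j=k}^{n-1}\alpha/(j+x)=\alpha\ln n+C+o(1)$ by comparison with the harmonic series, so $\ln r_n=\alpha\ln n+C'+o(1)$ and exponentiation gives the same conclusion. There is no real obstacle: the statement is the classical fact that products $\prod_j(1+\alpha/j)$ grow like $n^\alpha$ up to a positive constant, and the only mild care required is to ensure the factors are positive (automatic for $n\geq k$ with $k$ sufficiently large).
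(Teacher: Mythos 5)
Your proof is correct. The paper does not prove this lemma at all — it is imported from \cite{Galashin} — so there is no in-paper argument to compare against; your Gamma-function telescoping $r_n = r_k\,\Gamma(n+x+\alpha)\Gamma(k+x)/(\Gamma(n+x)\Gamma(k+x+\alpha))$ followed by $\Gamma(z+\alpha)/\Gamma(z)\sim z^\alpha$ is the standard and complete argument, the logarithmic variant is equally valid, and the one delicate point (positivity of the factors when $j+x$ is small) is properly dispatched by discarding finitely many initial terms, which cannot affect the existence or positivity of the limit.
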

Now, we formulate our initial estimate
\begin{lemma}
\label{lem:starting_low_bound}
There is $\gamma>0$, such that
\(
\inf_{n} M(n)/n^{\gamma} > 0
\)
almost surely.
\end{lemma}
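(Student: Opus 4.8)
The plan is to produce a lower bound of the form $M(n) \geq c n^{\gamma}$ by isolating a single vertex — say $v_2$ — and showing its degree alone already grows like a power of $n$; since $M(n) \geq \deg v_2(n)$, this suffices. The idea is that at each step the probability that $v_2$ receives the new edge is at least the probability that $v_2$ is sampled as $X^1_n$ (ignoring the choice mechanism, which can only help a high-degree vertex), which conditionally on $\F_n$ is at least $\Pr(d_{n+1}\geq 1)\cdot\frac{\deg v_2(n)+\beta}{(2+\beta)n+\beta} \geq \frac{\deg v_2(n)+\beta}{(2+\beta)n+\beta}$. So if we write $W(n) = \deg v_2(n) + \beta$, then $W(n)$ is a submartingale-type process with $\E(W(n+1)-W(n)\mid\F_n) \geq \frac{W(n)}{(2+\beta)n+\beta}$.

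First I would make this rigorous by the standard trick of dividing out the drift: set $Z_n = W(n)\big/\prod_{k=k_0}^{n-1}\bigl(1+\frac{1}{(2+\beta)k+\beta}\bigr)$ for a suitable starting index $k_0$, and check that $Z_n$ is a nonnegative supermartingale — wait, the inequality goes the wrong way for that directly, so instead I would argue as follows. Because $W(n+1)-W(n)\in\{0,1\}$, the increments are bounded, and one can compare $W(n)$ from below with an urn-type / pure-birth process whose intensity at value $w$ and time $n$ is exactly $\frac{w}{(2+\beta)n+\beta}$; equivalently, define $\tilde Z_n = W(n)\big/\prod_{k=k_0}^{n-1}\bigl(1+\frac{1}{(2+\beta)k+\beta}\bigr)$ and show $\tilde Z_n$ is a submartingale that is bounded in $L^2$ (the increments of $W$ being $0/1$ give a crude variance bound $\sum_n \frac{1}{n^2} \cdot(\text{prefactor})^{-2}<\infty$ once we know the prefactor grows like $n^{1/(2+\beta)}$), hence converges a.s. and in $L^1$ to a limit $\tilde Z_\infty$. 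Since $\E\tilde Z_n = \E\tilde Z_{k_0}>0$ and $\tilde Z_n\to\tilde Z_\infty$, the limit $\tilde Z_\infty$ is not identically zero; but I actually need it to be a.s. positive on the whole probability space, or at least bounded below. To get the uniform-in-$n$ positivity claimed in the lemma I would instead run this argument not from a fixed time but note that once $\deg v_2$ (or any vertex) has been large at some time, the multiplicative structure prevents it from decaying; more robustly, I would invoke the submartingale together with a second-moment / Paley–Zygmund argument to show $\Pr(\tilde Z_\infty>0)>0$, and then use the fact that the event $\{\inf_n M(n)/n^\gamma>0\}$ is (up to modification of $\gamma$ and the constant) a tail-type event whose probability is therefore $1$ — or, cleanly, observe that for *every* vertex $v_i$ the analogous $\tilde Z^{(i)}_n$ converges, and the maximum degree dominates each, and at least one of these limits is positive almost surely.

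By Lemma~\ref{lem:numbers} with $\alpha = \frac{1}{2+\beta}$, $x = \frac{\beta}{2+\beta}$, the normalizing product $\prod_{k=k_0}^{n-1}\bigl(1+\frac{1}{(2+\beta)k+\beta}\bigr)$ is comparable to $n^{1/(2+\beta)}$ up to positive constants, which identifies the exponent: the argument yields $\inf_n M(n)/n^{\gamma}>0$ a.s. for any $\gamma < \frac{1}{2+\beta}$, and in fact $\gamma = \frac{1}{2+\beta}$ works with the constant being the a.s.-positive random limit. I would state the lemma with an unspecified $\gamma>0$ since that is all that is needed downstream, so it is enough to take, say, $\gamma = \frac{1}{2(2+\beta)}$ and absorb all constants crudely.

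The main obstacle is the passage from "the limit $\tilde Z_\infty$ is not a.s. zero" (which follows immediately from $L^1$ convergence and $\E\tilde Z_n$ constant) to "$\inf_n M(n)/n^\gamma>0$ almost surely", i.e. controlling the event of small probability where $v_2$ never takes off. The clean fix is to apply the argument simultaneously to infinitely many vertices — e.g. to $v_{2^j}$ for $j\in\N$, which have disjoint "early histories" — and show that with probability one at least one of them acquires a positive renormalized limit; alternatively one can show the relevant event has probability $0$ or $1$ by a conditioning/tail argument. Handling this dichotomy carefully, rather than the routine submartingale estimates, is where the real work lies.
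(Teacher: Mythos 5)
There is a genuine gap, and in fact two linked problems. First, your opening inequality is false for a fixed vertex: you claim $\Pr(Y_n=v_2\mid\F_n)$ is at least the probability that $v_2$ is sampled as $X^1_n$, on the grounds that ``the choice mechanism can only help a high-degree vertex.'' But $v_2$ is not in general the high-degree vertex, and the choice mechanism works \emph{against} it: if $v_2=X^1_n$ but some $X^i_n$, $i>1$, has larger degree, then $Y_n\neq v_2$. So the drift lower bound $\E(\deg v_2(n+1)-\deg v_2(n)\mid\F_n)\ge\frac{\deg v_2(n)+\beta}{(2+\beta)n+\beta}$ simply does not hold, and the submartingale you want to divide through never gets off the ground. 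Second, as you yourself flag, even if the single-vertex drift estimate were valid, the $L^1$-convergence argument only yields $\Pr(\tilde Z_\infty>0)>0$, and your proposed fixes (disjoint ``early histories'' of $v_{2^j}$, or a tail-event dichotomy) are not substantiated; the vertices' trajectories are heavily dependent through the shared normalization $(2+\beta)n+\beta$ and through the choice mechanism, so the ``disjointness'' you invoke is not there.

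The paper's proof sidesteps both obstacles at once by never leaving $M(n)$. Because the choice is by \emph{maximum} degree, a sampled vertex of maximal degree always wins, so $p_n=\Pr(M(n+1)=M(n)+1\mid\F_n)$ is exactly $\sum_i\Pr(d_{n+1}=i)\bigl(1-(1-\tfrac{(M(n)+\beta)L(n)}{(2+\beta)n+\beta})^i\bigr)$, which is cleanly bounded below by $\tfrac{M(n)}{2(2+\beta)n}$; this is precisely the step that fails for a fixed non-maximal vertex. One then checks that $C_n/M(n)$ is a \emph{nonnegative} supermartingale for the explicit product $C_n$ with $C_n\asymp n^{1/(4(2+\beta))}$, and Doob's convergence theorem for nonnegative supermartingales gives a.s.\ convergence to a finite limit, hence $\sup_n C_n/M(n)<\infty$ a.s.\ and $\inf_n M(n)/n^{\gamma}>0$ a.s.\ with $\gamma=\tfrac{1}{4(2+\beta)}$. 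No Paley--Zygmund, no zero-one law, no second-moment control of the limit is needed: nonnegativity of the supermartingale does all the work that your ``make the positive-probability event into a full-probability event'' step was going to have to do by hand.
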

\begin{proof}
Define 
$$C_{n+1}=\frac{n}{n-1/(4(2+\beta))}C_n =\left(1+\frac{1/(4(2+\beta))}{n-1/(4(2+\beta))}\right)C_n,\,n\geq 2,$$
with $C_{2} = 1$. By Lemma~\ref{lem:numbers}, we have that $C_n/n^{1/(2(2+\beta))}$ converges to a positive limit.

We will prove that $C_{n}/M(n)$ is a supermartingale for $n\geq 2$. Suppose it is already proved. By Doob's theorem (see, e.g., Corollary 3, p. 509 of \cite{Shir96}), it converges almost surely to some finite limit. Therefore, there is a random positive variable $A_{n_0}$, such that $\frac{M(n)}{n^{\gamma}}\geq A_{n_0}$ almost surely for $\gamma=\frac{1}{4(2+\beta)}$ and $n\geq 2$. So, $\mathbb{P}(\inf_{n\in\mathbb{N}}\frac{M(n)}{n^{\gamma}}>0)=1$.
Now prove that $C_{n}/M(n)$ is a supermartingale, which concludes our prove.
 
Recall that $p_{n}$ equals to the probability to increase maximal degree at the $n$-{th} step conditionally on $\mathcal{F}_{n}$.
Note that for $n\geq 2$ (and, therefore, $M(n)\geq 2$)
\begin{align*}
p_{n} &=\sum_{i=1}^{\infty}\mathbb{P}(d_{n+1}=i)\left(1 - \left(1-\frac{(M(n)+\beta)L(n)}{(2+\beta)n+\beta}\right)^{i}\right) \\
&\geq\sum_{i=1}^{\infty}\mathbb{P}(d_{n+1}=i)\left(1 - \left(1-\frac{M(n)+\beta}{(2+\beta)n+\beta}\right)^{i}\right) \\
&\geq\sum_{i=1}^{\infty}\mathbb{P}(d_{n+1}=i)\left( \frac{M(n)+\beta}{(2+\beta)n+\beta}\right)  \\
&\geq 
\left( \frac{M(n)-1}{(2+\beta)n-1}\right) 
\geq\left( \frac{M(n)}{2(2+\beta)n}\right).
\end{align*}
If $n\geq 2$, for $1/M(n+1)$ we get
\begin{align*}
\E\left(1/M(n+1) | \mathcal{F}_n \right)&= \E\left(\left.\frac{\1\{M(n+1)=M(n)+1\}}{M(n)+1}
+\frac{\1\{M(n+1)=M(n)\}}{M(n)}\right| \mathcal{F}_n\right)\\
&=\left(\frac{p_{n}}{M(n)+1}+\frac{1-p_{n}}{M(n)}\right)\\
&=\frac{M(n)+1-p_{n}}{M(n)(M(n)+1)}\\
&=\frac{1}{M(n)}\left(1-\frac{p_{n}}{M(n)+1}\right)\\
&\leq\frac{1}{M(n)}\left(1-\frac{p_{n}}{2M(n)}\right)\\
&\leq\frac{1}{M(n)}\left(1-\frac{1}{4(2+\beta)n}\right),
\end{align*}
which concludes the proof.
\end{proof}

\section{Persistent hub}
\label{sec:hub}

Our method of proof is based on the comparison of our model with the standard preferential attachment model, and we use the technique of~\cite{Galashin} developed for the latter model.
We divide the proof of Proposition~\ref{prop:persistent_hub} into two parts. First, we prove that only finitely many vertices have degrees that at some time become maximal. Second, we prove that there are only finitely many times at which two vertices simultaneously have maximal degree. 

Let us introduce some notation:
$$\chi(n)=\min\{i\geq n:\deg v_{n+1}(i)=M(i)\},$$
$$U=\sum_{n=0}^{\infty}\1\{\chi(n)<\infty\},$$
$$\psi_{i,j}(n)=\min_{l\geq n}\{\deg v_{i}(l)=\deg v_{j}(l)\}.$$
Here $U$ is the number of vertices (of $V$) whose degrees were maximal at some moments, $\chi(n)$ is the moment it happens for the vertex $v_n$. 

\begin{lemma}
\label{lem:change_of_leader}
$U$ is finite almost surely.
\end{lemma}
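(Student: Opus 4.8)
The plan is to show that if a vertex $v_{n+1}$ ever attains the maximal degree (i.e.\ $\chi(n)<\infty$), then its degree at the time of its introduction cannot be too far below the current maximum, and that this can only happen for finitely many $n$. Concretely, by Lemma~\ref{lem:starting_low_bound} there is $\gamma>0$ with $M(k)\geq A k^{\gamma}$ for all $k$, where $A>0$ is a.s.\ positive. On the other hand, a vertex $v_{n+1}$ that is born at step $n$ has degree $1$ at that time, and its degree can only grow by preferential attachment; the key point (exactly as in the standard preferential-attachment analysis of~\cite{Galashin}) is to bound the probability that a vertex born with small degree ever catches up to the running maximum. I would compare the degree of $v_{n+1}$ from time $n$ onward with a Pólya-type urn / linear process: conditionally on $\F_k$, the degree of a fixed vertex of current degree $D$ increases at the next step with probability at most $\sum_{i}\Pr(d=i)\bigl(1-(1-(D+\beta)/((2+\beta)k+\beta))^i\bigr)\le \frac{m_1 (D+\beta)}{(2+\beta)k+\beta}$, which for the purpose of an upper bound behaves like a linear urn with rate $\asymp D/k$. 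A standard martingale argument (divide $\deg v_{n+1}(k)+\beta$ by an appropriate product $\prod_{j=n}^{k-1}(1+\frac{\alpha}{(2+\beta)j+\beta})\asymp (k/n)^{\alpha}$ with $\alpha=m_1/(2+\beta)$, use Lemma~\ref{lem:numbers}) shows that $\deg v_{n+1}(k)$ is stochastically dominated, up to a random $O(1)$ factor with exponential tails, by something of order $(k/n)^{\alpha}$.

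Given this, the event $\{\chi(n)<\infty\}$ forces $\deg v_{n+1}(k)\geq M(k)\geq A k^{\gamma}$ for some $k\geq n$, hence forces the urn martingale for $v_{n+1}$ to reach a value of order at least $A k^{\gamma} (n/k)^{\alpha} \cdot$(something); the point is that since $v_{n+1}$ starts at degree $1$ at time $n$ while the incumbent leader already has degree $\geq A n^{\gamma}$, the ratio $\deg v_{n+1}(k)/M(k)$ starts at order $n^{-\gamma}$ and, being a nonnegative supermartingale (after the right normalization), stays small with overwhelming probability. Quantitatively I would aim for a bound of the form $\Pr(\chi(n)<\infty\mid \F_n)\le C\exp(-c\, n^{\delta})$ for some $\delta>0$ on the event that $A$ is not too small, and then split on the value of $A$: on $\{A>1/K\}$ the bounds are summable in $n$, and $K\to\infty$ exhausts the probability space. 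By Borel--Cantelli (conditional version / direct summation of expectations), $\sum_n \Pr(\chi(n)<\infty)<\infty$ up to the null set where $A=0$, so $U=\sum_n \1\{\chi(n)<\infty\}$ is finite a.s.

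The main obstacle I expect is making the stochastic-domination step fully rigorous with explicit tail bounds: one must control the degree of $v_{n+1}$ uniformly over all future times $k\geq n$ simultaneously (not just at a fixed $k$), which requires a maximal inequality for the normalized degree martingale, and one must handle the coupling between "$v_{n+1}$ grows" and "the leader $M(k)$ also grows" since both draw from the same preferential-attachment dynamics. The cleanest route is probably to bound $\deg v_{n+1}(k)$ from above by an \emph{independent} linear urn (dominating the attachment probability by $\frac{m_1(D+\beta)}{(2+\beta)k+\beta}$ regardless of what other vertices do) and to bound $M(k)$ from below by Lemma~\ref{lem:starting_low_bound}; these two one-sided estimates decouple the problem. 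The exponent $\delta$ in the tail and the precise comparison with the urn are where the technique of~\cite{Galashin} does the real work, and I would follow it closely, the only new ingredient being the factor $m_1=\E d$ in the effective attachment rate and the condition $m_j<C^j$ to keep the series $\sum_i \Pr(d=i)(1-(1-x)^i)$ under control.
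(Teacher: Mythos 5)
The broad outline you sketch---use Lemma~\ref{lem:starting_low_bound} to lower-bound $M$, compare $\deg v_{n+1}$ with an urn, and apply Borel--Cantelli---is the right skeleton, but the route you call ``cleanest'' does not work, and the alternative you gesture at is not sharp enough. Decoupling fails because of an exponent mismatch: any upper bound on $\deg v_{n+1}(k)$ obtained by dominating the attachment probability by something linear in the degree grows at rate roughly $(k/n)^{\E d/(2+\beta)}$, whereas Lemma~\ref{lem:starting_low_bound} only gives $M(k)\gtrsim A k^{\gamma}$ with $\gamma = \tfrac{1}{4(2+\beta)}$, and $\gamma < \E d/(2+\beta)$. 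So no matter how favorable the random constants, the inequality $Z (k/n)^{\E d/(2+\beta)} < A k^{\gamma}$ fails for all sufficiently large $k$, and the two one-sided bounds by themselves cannot rule out the catch-up event. Your ratio-supermartingale idea is closer in spirit, but even granting that a suitably normalized version of $\deg v_{n+1}(k)/M(k)$ is a supermartingale, optional stopping would yield only $\Pr(\chi(n)<\infty) = O(n^{-\gamma})$, which is not summable since $\gamma<1$.

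The paper resolves both problems with a genuinely \emph{joint} coupling (Lemma~\ref{lem:coupling}): the pair $(\deg u,\deg v_{n+1})$, observed at the times $\rho_m$ when either degree changes, is coupled with a two-dimensional P\'olya-type walk $(A_m,B_m)$ so that the smaller of the two degrees is dominated by $\min\{A_m,B_m\}$ --- the max-choice rule only helps the leader. The catch-up event then reduces to the P\'olya walk equalizing its coordinates, and Corollary~2.1 of~\cite{Galashin} gives the \emph{exponential} bound $q(a,1)\le Q(a)/2^{a}$, which with $a\asymp n^{\gamma}$ makes $\sum_n q(a_n,1)$ converge. That exponential estimate comes from the explicit Beta-distribution limit for the P\'olya urn fraction, not from a generic supermartingale inequality, and it is precisely the ingredient your sketch is missing.
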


To prove the lemma we first need a result (which is stated below) on a random walk similar to the one from ~\cite{MP14},  that describes the evolution of degrees of two vertices in the preferential attachment model without choices.

Fix $n_0\in\mathbb{N},$ $i\neq j\in\mathbb{N},$ $i,j\leq n_0+1$. Let $n_0=\rho_0(i,j)<\rho_1(i,j)<\rho_2(i,j)<...$ be moments, when either $\deg v_i$ or $\deg v_j$ is changed. Consider the two-dimensional random walk $(S_{n}=S_{n}(i,j)=(\deg v_i(n),\deg v_j(n)),\,n\geq n_0)$.

\begin{lemma}
\label{lem:coupling}
There is a random walk $(T_n=(A_n,B_n),\,n\geq 0)$ on $\mathbb{Z}^2$ started from $(\deg v_i(n_0),\deg v_j(n_0))$ with transition probabilities $\frac{A_n+\beta}{A_n+B_n+2\beta}$ and $\frac{B_n+\beta}{A_n+B_n+2\beta}$ for moving one step right and one step up respectively, such that

$$\min\{\deg v_i(\rho_n),\deg v_j(\rho_n)\}\leq\min\{A_{n},B_{n}\} \text{ for } n\geq 0.$$
\end{lemma}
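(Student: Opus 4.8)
\emph{Plan.} I would deduce the lemma from a single one-step inequality comparing the max-choice rule with plain preferential attachment, and then build $T_n$ by an inductive coupling along the change times $\rho_n=\rho_n(i,j)$. Throughout I use that at a change step exactly one of $\deg v_i,\deg v_j$ increases by $1$, so $\deg v_i(\rho_n)+\deg v_j(\rho_n)$ increases by exactly $1$ with each $n$; the walk $T_n$ will have the same property for $A_n+B_n$.

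\emph{The key inequality.} I claim that for every $n$ and every pair of distinct vertices $v_i,v_j\in V(P_n)$ with $\deg v_i(n)\le\deg v_j(n)$,
\[
\frac{\Pr[Y_n=v_i\mid\F_n]}{\Pr[Y_n=v_j\mid\F_n]}\ \le\ \frac{\deg v_i(n)+\beta}{\deg v_j(n)+\beta},
\]
equivalently $\Pr[Y_n=v_i\mid\F_n]\le\frac{\deg v_i(n)+\beta}{\deg v_i(n)+\deg v_j(n)+2\beta}\,\Pr[Y_n\in\{v_i,v_j\}\mid\F_n]$. To prove it, condition also on $d_{n+1}=d$ and average at the end (the bound is uniform in $d$). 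Put $a=\deg v_i(n)$, $b=\deg v_j(n)$ and $q_v=(\deg v(n)+\beta)/((2+\beta)n+\beta)$, so the ordered sample $(X^1_n,\dots,X^d_n)$ takes a given value $t=(x_1,\dots,x_d)$ with probability $\prod_l q_{x_l}$. If $a=b$ the bound is an equality by symmetry, so assume $a<b$. Let $\mathcal A$ be the set of ordered $d$-tuples that contain $v_i$, do not contain $v_j$, and all of whose entries have degree at most $a$; on $\{Y_n=v_i\}$ the sample necessarily lies in $\mathcal A$ (the winner $v_i$ is sampled, the strictly-higher-degree $v_j$ is not, and no sampled vertex outranks $v_i$). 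Let $\phi$ replace every coordinate equal to $v_i$ by $v_j$: it is injective on $\mathcal A$ (tuples in $\mathcal A$ carry no $v_j$, so $\phi$ is undone by replacing $v_j$ by $v_i$), and each $t'\in\phi(\mathcal A)$ has $v_j$ as its unique maximal-degree entry (the other coordinates still have degree at most $a<b$), hence lies in $\{Y_n=v_j\}$. If $t\in\mathcal A$ has $k=k(t)\ge1$ coordinates equal to $v_i$, then the probability of the sample being $\phi(t)$ is $(q_j/q_i)^k\ge q_j/q_i$ times that of its being $t$. Summing over $\mathcal A$ and using injectivity of $\phi$,
\[
\Pr[Y_n=v_j\mid\F_n,d_{n+1}=d]\ \ge\ \frac{q_j}{q_i}\,\Pr[Y_n=v_i\mid\F_n,d_{n+1}=d],
\]
and $q_j/q_i=(b+\beta)/(a+\beta)$, which yields the claim after averaging over $d_{n+1}$. (Intuitively: the max-choice rule never lets the smaller of two fixed degrees grow faster than ordinary preferential attachment would.)

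\emph{The coupling.} On a suitably enlarged probability space I construct $T_n$ together with the model, letting $T$ make one move at each change time and stay put otherwise, starting from $T_0=(\deg v_i(n_0),\deg v_j(n_0))$, maintaining for all $n$ the two invariants
\[
A_n+B_n=\deg v_i(\rho_n)+\deg v_j(\rho_n),\qquad \min\{A_n,B_n\}\ge\min\{\deg v_i(\rho_n),\deg v_j(\rho_n)\}.
\]
The first holds automatically, as both sides start equal and increase by $1$ per step; in particular $\{A_n,B_n\}$ and $\{\deg v_i(\rho_n),\deg v_j(\rho_n)\}$ always have equal sum, so each is determined by its own minimum. For the second invariant, note that both minima are nondecreasing and the right-hand one increases by at most $1$ per step. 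Hence if the invariant is strict at step $n$ it survives regardless, and I let $T$ take its step with the prescribed transition probabilities, independently of the model. If it is an equality with common value $a$, the equal-sum fact makes both pairs equal to some $\{a,b\}$ with $b\ge a$; when $a=b$ neither minimum can move at this step and the invariant persists; when $a<b$, $\min\{A_n,B_n\}$ increases with probability exactly $(a+\beta)/(a+b+2\beta)$ whereas, by the key inequality, $\min\{\deg v_i,\deg v_j\}$ increases with conditional probability at most $(a+\beta)/(a+b+2\beta)$, so (with the help of an auxiliary independent uniform variable, used only to realize $T$'s step with its exact law) I couple the two steps so that $\min\{A_n,B_n\}$ increases whenever $\min\{\deg v_i,\deg v_j\}$ does. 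In every case both invariants pass to $n+1$. Marginally $T_n$ is then the desired random walk, and the second invariant is exactly the conclusion of the lemma.

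\emph{Main obstacle.} The only real content is the key inequality — specifically the injection $\phi$, which is where one must check that choosing the maximum among several preferentially-sampled vertices, rather than a single one, can only slow down (never speed up) the growth of the smaller of two fixed degrees. Everything after that is bookkeeping resting on the deterministic unit increase of the degree sum at each change step and on the monotonicity of a minimum. (It is also tacitly needed that $\rho_n<\infty$ almost surely for every $n$; this follows from the almost sure divergence of the degree of any fixed vertex, which I would note and not dwell on.)
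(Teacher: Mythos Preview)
Your argument is correct and arrives at exactly the same one-step comparison the paper uses, namely that when $\deg v_i<\deg v_j$ the conditional probability of incrementing the smaller degree, given that one of the two changes, is at most $(\deg v_i+\beta)/(\deg v_i+\deg v_j+2\beta)$. The difference is in how that inequality is obtained. The paper introduces the cumulative weights $F_m,G_m$ and the generating function $h_m(x)=\sum_k \Pr(d=k)\,(x/((2+\beta)m+\beta))^k$, bounds $p_m^i$ from below and $p_m^j$ from above by increments of $h_m$, and then invokes convexity of $h_m$ together with $F_m\ge G_m$ to get the ratio bound. Your injection $\phi$ (swap every $v_i$ in the sample for $v_j$) is a purely combinatorial substitute for that analytic step: it gives the same inequality without ever naming $F_m,G_m,h_m$, and the factor $(q_j/q_i)^k\ge q_j/q_i$ plays the role that convexity plays in the paper. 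Your approach is more elementary and makes the mechanism (``max-choice can only handicap the smaller of two fixed degrees'') visible at the level of sample tuples; the paper's approach has the advantage that the auxiliary functions $h_m$ are reused elsewhere and that the bounds on $p_m^i,p_m^j$ are individually informative. On the coupling side you spell out explicitly, via the two invariants and an auxiliary uniform, what the paper delegates to the stochastic-ordering reference; this is fine and in fact slightly more self-contained.
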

\begin{proof}
Since walks $S_{\rho_n}$ and $T_n$ moves at the same times ($n\geq 0$), by standard results on a stochastic ordering (see, e.g., \cite{KKB77} Theorem 2), it is enough to show that, for any $n\geq 0$, the probability to increase $\min\{\deg v_i(\rho_n),\deg v_j(\rho_n)\}$ does not exceed the probability to increase $\min\{A_n,B_n\}$.
Without loss of generality, assume that $\deg v_i(n)>\deg v_j(n)$ (if $\deg v_i(n)=\deg v_j(n)$, both probabilities equal $0$).

For any $m\geq n_0$, set
\begin{align*}
  F_{m}&=\sum_{v_{k}\in V(P_{m})}(\deg v_{k}(m)+\beta)\;\1\{\deg v_{k}(m) < \deg v_{i}(m)\} \text{ and } \\
  G_{m}&=\sum_{v_{k}\in V(P_{m})}(\deg v_{k}(m)+\beta)\;\1\{\deg v_{k}(m) \leq \deg v_{j}(m)\},
\end{align*}
and let 
$$p_m^i = \Pr[ \deg v_i(m+1) = \deg v_i(m) +1]$$ 
and 
$$p_m^j = \Pr[\deg v_j(m+1) = \deg v_j(m) + 1].$$

Introduce functions
$$h_m(x)=\sum_{k=1}^{\infty}\mathbb{P}(d=k)\left(\frac{x}{(2+\beta)m+\beta}\right)^k,\;x\in[0,(2+\beta)m+\beta].$$
Note that $h_m(x)$ convex for $x\in[0,(2+\beta)m+\beta]$ and, therefore,
\begin{equation}
\label{h}
h_m(x+y)\geq h_m(x)+yh_m^{\prime}(x)\text{ for }x,x+y\in[0,(2+\beta)m+\beta].
\end{equation}

The probability that $\deg v_i(m)$ increases is at least the probability that $v_{i} \in \{X^1_m$,...,$X^{d_m}_m\}$ and that all the others $X^k_m$ have degrees strictly less than $\deg v_i(m).$ Thus,
\begin{equation}
\label{F}
      p_m^i \geq h_m(F_m+\deg v_i(m)+\beta) -h_m(F_m).
\end{equation}
Likewise, the probability that $\deg v_j(m)$ increases is at most the probability that $v_{j} \in \{X^1_m$,...,$X^{d_m}_m\}$ and $\deg v_j(m) = \max_{1\leq k \leq d_m} \deg X^k_m(m).$  Thus,
\begin{equation}
\label{G}
    p_m^j \leq h_m(G_m) - h_m(G_m - \deg v_j(m)-\beta).
\end{equation}
If $\deg v_i(m) > \deg v_j(m),$ we have that $F_m \geq G_m$ and, since $h_m^{\prime}(x)$ is increasing for $x\in[0,(2+\beta)m]$, $h_{m}^{\prime}(F_m)\geq h_{m}^{\prime}(G_m)$. Hence, by (\ref{h}), (\ref{F}) and (\ref{G})
\begin{align*}
  \frac{p_m^i}{p_m^j}
  &\geq \frac{ h_m(F_m+\deg v_i(m)+\beta) - h_m(F_m)}{h_m(G_m) - h_m(G_m - \deg v_j(m)-\beta)}  \\
  &\geq \frac{ h_m^{\prime} (F_m) (\deg v_i(m)+\beta)}{h_m^{\prime}(G_m)(\deg v_j(m)+\beta)}  \\
  &\geq \frac{ h_m^{\prime} (G_m) (\deg v_i(m)+\beta)}{h_m^{\prime}(G_m)(\deg v_j(m)+\beta)}
  =\frac{ \deg v_i(m)+\beta }{ \deg v_j(m)+\beta }.
\end{align*}
Thus,
\[
  \frac{p_m^j}{p_m^i + p_m^j}
  =\frac{1}{1 + \tfrac{p_m^i}{p_m^j}}
  \leq\frac{1}{1 + \tfrac{ \deg v_i(m)+\beta}{\deg v_j(m)+\beta}}
  =\frac{\deg v_j(m)+\beta}{\deg v_i(m)+ \deg v_j(m)+2\beta},
\]
which concludes the proof of the lemma.
\end{proof}

Introduce the stopping times $\pi(i,j)=\min\{n\geq 0: A_{n}=B_{n}|A_{0}=i, B_{0}=j\}$ and the function $q(i,j)=\mathbb{P}(\pi(i,j)<\infty)$. Although, the arguments of $q$ and $\pi$ are integers, sometimes in estimates we will write nonintegers in arguments meaning the values of the floor function of it (or $1$ if it is less then 1).

We need Corollary 2.1 from~\cite{Galashin}, that gives us the following estimate:
$$q(i,1)\leq\frac{Q(i)}{2^i}$$
for some polynomial function $Q(x)$.

We will now use it to prove Lemma \ref{lem:change_of_leader}.
\begin{proof}
By Lemma~\ref{lem:starting_low_bound}, we get
\(
M(n)\geq M n^{\gamma}
\)
for all $n\geq 0$ and some random $M>0$ almost surely. Hence, at time $n$ there is at least one vertex $u$ with degree not less then $Mn^{\gamma}$ with probability 1. The degree of the vertex $v_{n+1}$ could become maximal only if at some moment it exceeds the degree of this vertex. Therefore,
$$\1\{\chi(n)<\infty\}\leq\1\{\deg v_{n+1}(m)=\deg u(m)\;\text{for some}\;m>n\}.$$
Due to Lemma~\ref{lem:coupling}, for each $n\geq 0$ there is a random walk $(T_m=(A_m,B_m),\,m\geq 0)$ started from $(\deg u(n),1)$ with the transition probabilities $\frac{A_n+\beta}{A_n+B_n+2\beta}$ and $\frac{B_n+\beta}{A_n+B_n+2\beta}$, such that $\min\{\deg u(\rho_m),\deg v_{n+1}(\rho_m)\}\leq\min\{A_m,B_m\}$ for $m\geq 0$, where $n_0=n$. Then 
$$\1\{\deg v_{n+1}(m)=\deg u(m)\;\text{for some}\;m>n\}\leq\1\{A_m=B_m\;\text{for some}\;m>n\}.$$
Fix $C>0$. Obviously, as $\mathbb{P}(\deg u(n)\geq Mn^{\gamma})=1$, there are random walks $(T_{m}^{1}=(A_m^1,B_m^1),\,m\geq 0)$, $(T_{m}^{2}=(A_m^2,B_m^2),\,m\geq 0)$ started from $(Mn^{\gamma},1)$, $(Cn^{\gamma},1)$ respectively with the same transition probabilities, such that
$$\pi_n^1=\min\{m\geq 0:A_m^1=B_m^1\}\leq\min\{m\geq 0:A_m=B_m\}=\pi_n,$$
and on $M>C$
$$\pi_n^2=\min\{m\geq 0:A_m^2=B_m^2\}\leq\pi_n^1.$$
Therefore,
$$\1\{\deg v_{n+1}(m)=\deg u(m)\;\text{for some}\;m>n\}\1\{M>C\}$$
$$\leq\1\{M>C\}\1\{\pi_n<\infty\}\leq\1\{M>C\}\1\{\pi_n^1<\infty\}\leq\1\{M>C\}\1\{\pi_n^2<\infty\}.$$
Then
$$\sum_{n=0}^{\infty}\mathbb{P}(\chi(n)<\infty,M>C)=\sum_{n=0}^{\infty}\mathbb{E}\1\{\chi(n)<\infty\}\1\{M>C\}$$
$$\leq\sum_{n=0}^{\infty}\mathbb{E}\1\{\deg v_{n+1}(m)=\deg u(m)\;\text{for some}\;m>n\}\1\{M>C\} $$
$$\leq\sum_{n=0}^{\infty}\mathbb{E}\1\{\pi_n^2<\infty\}\1\{M>C\}\leq \sum_{n=0}^{\infty}\mathbb{E}\1\{\pi_n^2<\infty\}$$
$$=\sum_{n=0}^{\infty}q(Cn^{\gamma},1)\leq
\sum_{n=0}^{\infty}\frac{Q(Cn^{\gamma})}{2^{Cn^{\gamma}}}<\infty.$$
Hence, by Borel-Cantelli Lemma
$$U\1\{M>C\}=\sum_{n=0}^{\infty}\1\{\chi(n)<\infty\}\1\{M>C\}<\infty\quad a.s.$$
Since $M>0$ almost surely,
$$\mathbb{P}(U<\infty)=\mathbb{P}\left(\bigcap_{C>0}\{U\1\{M>C\}<\infty\}\right)=1.$$
\end{proof}

Now let $J$ denote the set of vertices whose degrees become maximal at some moment. According to Lemma~\ref{lem:change_of_leader}, $J$ is finite almost surely. Introduce stopping times
$$\zeta_0(v_i,v_j)=n_0,$$
$$\zeta_l(v_i,v_j)=\inf\{n>\zeta_{l-1}(v_i,v_j):$$
$$\deg v_i(n-1)\neq\deg v_j(n-1)\,\text{and}\,\deg v_i(n)=\deg v_j(n) \},$$
$$N(v_i,v_j)=\sup\{l:\zeta_{l}(v_i,v_j)<\infty\},$$
$$\xi=\sup\{\zeta_{N(v_i,v_j)}(v_i,v_j)|v_i\in J,v_j\in J\}.$$
Note that almost sure finiteness of $\xi$ implies Proposition~\ref{prop:persistent_hub} cause any vertex, whose degree become maximal at any time is in $J$, and an order of degrees of vertices from $J$ does not change after the moment $\xi$. Thus, to complete the proof of Proposition~\ref{prop:persistent_hub} we need the following lemma:
\begin{lemma}
\label{lem:change_of_leadership}
$\xi$ is finite almost surely.
\end{lemma}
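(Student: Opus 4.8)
The plan is to show that for any fixed pair $v_i, v_j \in J$, the number $N(v_i,v_j)$ of times their degrees become equal is finite almost surely; since $J$ is finite (Lemma~\ref{lem:change_of_leader}), the supremum $\xi$ over all such pairs is then a finite maximum of finitely many almost surely finite quantities. So fix $v_i$ and $v_j$ and work conditionally on the (almost sure) event that $M(n) \geq M n^{\gamma}$ for all $n$ and some random $M>0$, provided by Lemma~\ref{lem:starting_low_bound}. Because $J$ is finite, for all sufficiently large $n$ neither $v_i$ nor $v_j$ acquires a new maximal-degree status, so in fact $\deg v_i(n)$ and $\deg v_j(n)$ are both at least $c n^{\gamma}$ once one of them has entered $J$; more to the point, whenever their degrees coincide at a large time $\zeta_l := \zeta_l(v_i,v_j)$, the common value $\deg v_i(\zeta_l)=\deg v_j(\zeta_l)$ is at least of order $\zeta_l^{\gamma}$ (either because it equals $M(\zeta_l)$, or at minimum because it exceeds the degree they had at the previous coincidence time).

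The core estimate is a quantitative bound on the probability that two vertices, currently at a common degree $k$ at time $m$, ever become equal again. Using Lemma~\ref{lem:coupling} to couple the pair-dynamics from above by the choice-free two-dimensional walk $(T_n)=(A_n,B_n)$ with the Mori transition probabilities $\frac{A_n+\beta}{A_n+B_n+2\beta}$, $\frac{B_n+\beta}{A_n+B_n+2\beta}$, I reduce to a walk on $\Z^2$. Starting the walk from $(k+1,k)$ — i.e. just after one of them has moved ahead — the probability that it ever returns to the diagonal is exactly $q(k+1,k)$ in the notation already introduced. By the shift-invariance of these transition probabilities under adding a constant to both coordinates, $q(k+1,k)$ is comparable to $q(1,k)$ up to the combinatorics handled in \cite{Galashin}, and in any case the Corollary~2.1 estimate $q(i,1)\leq Q(i)/2^{i}$ gives a bound that decays geometrically in $k$ (a polynomial correction times $2^{-ck}$ for the relevant configuration). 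The point is that each successive return to equality forces $k$ to be at least of order $n^{\gamma}$, hence the probability of yet another return is summably small.

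Concretely, I would argue: on the event $\{M>C\}$, if $\zeta_l < \infty$ then $\deg v_i(\zeta_l) = \deg v_j(\zeta_l) \geq C\zeta_l^{\gamma} \geq C l^{\gamma}$ (using that $\zeta_l \geq l$, since each $\zeta$-time is a distinct integer, and monotonicity of the common degree along the coincidence times). Between $\zeta_l$ and $\zeta_{l+1}$ the degrees first split and then must recoincide; by Lemma~\ref{lem:coupling} and the return estimate above,
\[
\mathbb{P}\bigl(\zeta_{l+1}<\infty \mid \mathcal{F}_{\zeta_l},\, M>C\bigr) \leq q\bigl(C l^{\gamma}+1,\, C l^{\gamma}\bigr) \leq \frac{\tilde Q(C l^{\gamma})}{2^{c\,C l^{\gamma}}},
\]
for a polynomial $\tilde Q$ and constant $c>0$ coming from \cite{Galashin}. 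The right-hand side is summable in $l$, so by Borel--Cantelli only finitely many $\zeta_l$ are finite on $\{M>C\}$, i.e. $N(v_i,v_j)\1\{M>C\}<\infty$ a.s.; letting $C\downarrow 0$ and using $M>0$ a.s. gives $N(v_i,v_j)<\infty$ a.s. Taking the (finite) maximum over $v_i,v_j\in J$ yields $\xi<\infty$ a.s.

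The main obstacle I anticipate is the bookkeeping that turns "the common degree at the $l$-th coincidence is large" into a clean summable bound: one must be careful that the coupling of Lemma~\ref{lem:coupling} is applied afresh at each coincidence time with the correct initial condition, that the return probability $q$ is genuinely monotone in the starting degree (so that replacing the true common degree by the lower bound $Cl^{\gamma}$ only increases $q$), and that the Mori $\beta$-shift does not spoil the geometric decay extracted from \cite{Galashin}'s Corollary~2.1 (which is why one needs $q(i,1)\leq Q(i)/2^i$ rather than a weaker polynomial bound). Everything else is a routine Borel--Cantelli argument analogous to the proof of Lemma~\ref{lem:change_of_leader}.
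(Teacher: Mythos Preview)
Your reduction to showing $N(v_i,v_j)<\infty$ a.s.\ for each fixed pair is the same first step as in the paper, but your quantitative Borel--Cantelli argument has a fatal flaw in the core estimate. The transition probabilities $\tfrac{A+\beta}{A+B+2\beta}$ are \emph{not} invariant under adding a constant to both coordinates, so there is no reason for $q(k+1,k)$ to be comparable to $q(k,1)$. In fact the opposite is true: from $(k+1,k)$ the very first step lands on the diagonal with probability $\tfrac{k+\beta}{2k+1+2\beta}\to\tfrac12$, so $q(k+1,k)$ is bounded below by a constant near $1/2$ and does not decay in $k$ at all. The exponential bound $q(i,1)\le Q(i)/2^i$ from \cite{Galashin} relies essentially on the large asymmetry of the starting point; it gives you nothing when the two coordinates are within $1$ of each other. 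Consequently your displayed inequality for $\Pr(\zeta_{l+1}<\infty\mid\mathcal F_{\zeta_l})$ is false, and the Borel--Cantelli sum diverges.

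There is also a secondary gap: your lower bound on the common degree at time $\zeta_l$ is not justified. Membership in $J$ only says each vertex was maximal at \emph{some} moment, not that their degrees track $M(n)$ thereafter, so you cannot invoke $M(n)\ge Mn^\gamma$ to get $\deg v_i(\zeta_l)\ge C\zeta_l^\gamma$. The only robust lower bound is that the common degree increases by at least one between consecutive $\zeta_l$, giving merely $\deg v_i(\zeta_l)\ge l+\text{const}$; but as noted above, even an arbitrarily large common degree would not make the return probability small.

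The paper's argument avoids all of this by not seeking a quantitative return bound. It applies the coupling of Lemma~\ref{lem:coupling} once, at the starting time $n_0$, to dominate $N(v_i,v_j)$ by the total number $R$ of diagonal returns of the single P\'olya-type walk $(A_n,B_n)$. Then it uses the classical fact that $A_n/(A_n+B_n)$ converges a.s.\ to a $\Beta(a+\beta,b+\beta)$ random variable, which is a.s.\ not equal to $1/2$; hence the ratio equals $1/2$ only finitely often, so $R<\infty$ a.s. This soft convergence argument is exactly what is needed, since the return probabilities themselves are not summable.
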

\begin{proof}
Since $J$ is finite almost surely, it is enough to prove that, for any $v_i,v_j\in V$, $N(v_i,v_j)$ is finite almost surely. Due to the coupling from Lemma~\ref{lem:coupling} there is a version of $T$, such that $\min\{\deg v_i(\rho_n),\deg v_j(\rho_n)\}$ is dominated by $\min\{A_{n},B_{n}\}$ for $n\geq 0$. 

Introduce stopping times
$$\zeta^T_0=\max\{i+1,j+1\},$$
$$\zeta^{T}_l=\inf\{n>\zeta^{T}_{l-1}:A_{n-1}\neq B_{n-1}\,\text{and}\,A_n=B_n \},$$
$$R=\sup\{l:\zeta^T_{l}<\infty\}.$$
Then $N(v_i,v_j)\leq R$. 

It is a standard fact about P\'olya urn model that if $T_n=(A_n,B_n)$ starts from a point $(a,b)$, then the fraction $A_{n}/(A_{n}+B_{n})$ tends in law to a random variable $H(a,b)$ as $n$ tends to infinity, where $H(a,b)$ has beta probability distribution:
$$H(a,b)\sim\Beta(a+\beta,b+\beta).$$
(See, e.g., Theorem 3.2 in \cite{M09} or Section 4.2 in \cite{JK77}). Thus, the limit of $A_n/(A_n + B_n)$ exists almost surely, and it takes value $1/2$ with probability 0 for any starting point of the process $T$.  Hence, this fraction can be equal to $1/2$ only finitely many times almost surely, and so $R$ is finite almost surely, which completes the proof.
\end{proof}

\section{Final results}
\label{sec:finald}
In this section, we finish the proof of Theorem~\ref{thm:max_degree}.
Introduce the function
\[
f(x)=\sum_{i=1}^{\infty}\mathbb{P}(d=i)(1-(1-x/(2+\beta))^{i}).
\]
Note that condition $m_j<C^j$ for some $C>0$ implies that all moments of $d$ are finite. Hence, the function $f(x)$ has derivatives of any order in $[0,2+\beta]$. Also we have that 
$$1-(1-x/(2+\beta))^i=-\sum_{j=1}^{i}\left(-\frac{x}{2+\beta}\right)^{j}{i \choose j}.$$
Therefore, by changing the order of summation we get that
\begin{equation}
    \label{eq:f_m_j}
\begin{aligned}
f(x)
&=\sum_{i=1}^{\infty}\mathbb{P}(d=i)\sum_{j=1}^{i}(-1)^{j+1}\left(\frac{x}{2+\beta}\right)^{j}{i \choose j}\\
&=\sum_{j=1}^{\infty}(-1)^{j+1}\left(\frac{x}{2+\beta}\right)^{j}\sum_{i=j}^{\infty}\mathbb{P}(d=i){i \choose j}\\
&=\frac{x}{2+\beta}\sum_{j=1}^{\infty}\left(-\frac{x}{2+\beta}\right)^{j-1}m_j.
\end{aligned}
\end{equation}
From \eqref{p_n} we have
\begin{equation}
\label{eq:p_n_f}
    p_n=f\left(\frac{(M(n)+\beta)L(n)}{n+\frac{\beta}{2+\beta}}\right).
\end{equation}
Recall that $p_n$ is probability to increase $M(n)$ conditional on $\mathcal{F}_n$. If $L(n)=1$ (it holds for all $n>N$ due to Proposition~\ref{prop:persistent_hub}), then $p_n=f\left(\frac{M(n)+\beta}{n+\frac{\beta}{2+\beta}}\right)$. 
Therefore, to analise $p_n$ using function $f(x)$ we need condition $L(n)=1$ for $n$ large enought.

Let $C>0$.
Introduce the events $D(C)=\{L(n)=1 \;\forall n\geq C\}$, and the stopping time
\(
\eta_C=\inf\{n \geq C:L(n) > 1\}.
\)
This is the stopping time since 
$$\{\eta_C> n\}=\{L(k)=1,\; C\leq k\leq n\}\in\F_n.$$
Note that $D(C)=\{N\leq C\}$, where $N$ is a random variable, such that $L_n=1$ for $n>N$. 
By Proposition~\ref{prop:persistent_hub}, $\mathbb{P}(D(C))\to 1$ as $C\to\infty$. Also, $\eta_C$ is the first moment since $C$ such, that $L(n)\neq 1$, and $\{\eta_C=\infty\}=D(C)$. We use $\eta_C$ to build different supermartingales during the proof, in particular we use that
\begin{equation}
	\label{eq:eta_C}
\begin{aligned}
\mathbb{E}\left(\1\{C\leq n<\eta_C\}(M(n+1)-M(n))|\F_n\right)
&\!=\!\1\{C\leq n<\eta_C\}p_n\\
&\!=\!\1\{C\leq n<\eta_C\}f\!\left(\!\frac{M(n)+\beta}{n+\frac{\beta}{2+\beta}}\!\right).
\end{aligned}
\end{equation}

Let us describe the main steps of the proof. The behavior of $M(n)$ is similar to the behavior of numbers $C_n$ such that $C_{n+1}-C_{n}=f(\frac{C_{n}}{n})$. It depends on first term in Taylor expansion of $f(x)$ in powers of $x$. By equation \eqref{eq:f_m_j} this term is equal to $\frac{\E d}{2+\beta}x$. So, we would consider cases when $\frac{\E d}{2+\beta}>1$ and $\frac{\E d}{2+\beta}\leq 1$. In the last one we additionally consider case $\frac{\E d}{2+\beta}=1$.

First, we use stochastic approximation to get almost sure convergence of $M(n)/n$ to the zero set of $f(x)-x$ (Lemma~\ref{lem:zero_set}).
If $\mathbb{E}d>2+\beta$, the zero set consists of points $0$ and $x^{\ast}$, so we use martingale arguments to show that $M(n)/n$ does not converges to $0$ almost surely. If $\mathbb{E}d\leq 2+\beta$, $0$ is the only nonnegative element of zero set, so we get that $M(n)/n\to 0$ almost surely. Then we use it, along with some martingales and Taylor expansion, to prove Theorem~\ref{thm:max_degree} for $\mathbb{E}d<2+\beta$. Finally we prove Theorem~\ref{thm:max_degree} for $\mathbb{E}d=2+\beta$ using additional martingales along with more accurate Taylor expansion.  
 
Set $g(x)=f(x)/x$ for $x\neq 0$. From \eqref{eq:f_m_j} we have
\begin{equation}
    \label{eq:g_m_j}
g(x)=\frac{1}{2+\beta}\sum_{j=1}^{\infty}\left(-\frac{x}{2+\beta}\right)^{j-1}m_j
\end{equation}
for $x\neq 0$. Set 
$$g(0)=\frac{1}{2+\beta}\sum_{j=1}^{\infty}\left(-\frac{0}{2+\beta}\right)^{j-1}m_j=\mathbb{E}d/(2+\beta).$$
If $\mathbb{E}d>2+\beta$, then $g(0)>1$.
From \eqref{eq:eta_C} we get that
\begin{equation}
    \label{eq:g(x)}
\begin{aligned}
\1\{C\leq n<\eta_C\}\frac{p_n}{M(n)}&=\1\{C\leq n<\eta_C\}\frac{1}{M(n)}f\left(\frac{M(n)+\beta}{n+\frac{\beta}{2+\beta}}\right)\\
&=\1\{C\leq n<\eta_C\} \frac{M(n)+\beta}{M(n)\left(n+\frac{\beta}{2+\beta}\right)} g\left(\frac{M(n)+\beta}{n+\frac{\beta}{2+\beta}}\right).
\end{aligned}
\end{equation}
We are interested in behavior of $g(x)$ when $x$ is small. Notice, that $f(0)=0$, $f^{\prime}(0)=\mathbb{E}d/(2+\beta)$, $f(x)<1$ for $x\in[0,2+\beta)$ (in particular, $f(1)<1$) and $f(x)$ is concave for $0\leq x\leq 1< 2+\beta$. Therefore, if $\mathbb{E}d> 2+\beta$, then there is a unique solution $x^{\ast}$ of the equation $f(x)=x$ in the interval $(0,1)$ and $f(x)>x$ for $x\in(0,x^{\ast})$. Since $g(x)=f(x)/x$ and $g(0)=\mathbb{E}d/(2+\beta)$, we have
\begin{equation}
    \label{eq:g_ast}
g(x)>1\text{ for }x\in[0,x^{\ast})\text{ if } \mathbb{E}d>2+\beta.
\end{equation} 
If $\mathbb{E}d\leq 2+\beta$, then
$$g(x)=f(x)/x<\mathbb{E}d/(2+\beta)\leq 1$$
for $x\in(0,1)$. 

We will apply the stochastic approximation framework laid out in~\cite{Pemantle} to show that, for $\mathbb{E}d>2+\beta$, $Z_n := M(n)/n \to x_{\ast}$ almost surely. A process $(Z_n)_{n \geq 0}$ adapted to the filtration $(\filt_n)_{n \geq 0}$ is called a \emph{stochastic approximation process} if it can be decomposed as
\begin{equation}
\label{stoch_app}
	Z_{n+1} - Z_n = \frac{1}{n}\left( F(Z_n) + \xi_{n+1} + R_n \right),
\end{equation}
where $F$ is some function, $\Exp( \xi_{n+1} | \filt_n)=0,$ and $R_n$ is an $(\filt_n)$ adapted process satisfying $\sum_{n \geq 1} n^{-1} |R_n| < \infty$ almost surely. 

From the definition of $M(n)$, we have that
\begin{equation*}
\begin{aligned}
\Exp\left( Z_{n+1} - Z_n \vert \filt_n \right) 
&= \Exp\left( \frac{M(n+1)}{n+1}-\frac{M(n)}{n}+\frac{M(n)}{n+1}-\frac{M(n)}{n+1}\vert \filt_n\right)\\
&= \Exp\left( \frac{-M(n)}{n(n+1)}+\frac{M(n+1) - M(n)}{n+1} \vert \filt_n \right) \\
&=\frac{-Z_n}{n+1} + \frac{f[((M(n)+\beta)L(n))/(n+\beta/(\beta+2))]}{n+1}.
\end{aligned}
\end{equation*}
Set $F(x) = f(x) - x.$ Thus,
$$\Exp\left( Z_{n+1} - Z_n \vert \filt_n \right) = 
	\frac{F(Z_n)}{n+1} + \frac{f[((Z(n)+\beta/n)L(n))/(1+\beta/(n(\beta+2)))]-f(Z_n)}{n+1}.$$   
Recall that function $f(x)$ has bounded derivative in $[0,1]$. Hence there is constant $c>0$ (that depends on $d$ and $\beta$), such that
\begin{equation}
\label{eq:f_diff}    \left|f\left(\frac{Z_n+\beta/n}{1+\beta/((2+\beta)n)}\right)-f(Z_n)\right|\leq\frac{c}{n}.
\end{equation}
Set 
\[
	R_n = n\Exp\left( Z_{n+1} - Z_n \vert \filt_n \right) - F(Z_n),
\]
and note that, for (\ref{stoch_app}) to hold, we then must take $\xi_{n+1} = n(Z_{n+1} - \Exp( Z_{n+1} \vert \filt_n)).$ 

By Proposition 
\ref{prop:persistent_hub}, there is a random variable $N$ so that for all $n \geq N,$ $L_n = 1$ almost surely. To estimate $|R_n|$ we would use \eqref{eq:f_diff} to bound $\Exp\left( Z_{n+1} - Z_n \vert \filt_n \right)$ for $n\geq N$. Also, note that $|F(Z_n)| \leq 1$ almost surely. As result we have that
\begin{align*}
	\sum_{n =2}^\infty \frac{|R_n|}{n}
	&\leq 
	\sum_{n =2}^\infty \1 \{L(n)\neq 1\}+
	\sum_{n =2}^\infty \1 \{L(n) = 1\}\frac{|R_n|}{n}\\
	&=
	\sum_{n =2}^\infty\! \1 \{L(n)\!\neq\! 1\}
	\!+\! 
	\sum_{n = 2}^\infty \!\frac{\left|n\!\left(\!f\!\left(\!\frac{(Z_n+\beta/n)}{1+\beta/((2+\beta)n)}\!\right)\!-\!f(Z_n)\!\right)\! -\!F(Z_n)\right|}{n(n+1)}\1\{L(n)\!=\!1\}\\
	&\leq
	N
	+ \sum_{n = 2}^\infty \left(c+1\right)\left(\frac{1}{n}-\frac{1}{n+1}\right)
	=
	\frac{c+1}{2}+N.
\end{align*}
This is finite almost surely by the finiteness of $N$. Hence, $(Z_n)_{n \geq 0}$ is a stochastic approximation process.

We now use the following corollary of Lemma 2.6 from~\cite{Pemantle}.
\begin{proposition}
	\label{prop:sa}
	If for all $n\in\mathbb{N}$ $\Exp(\xi_{n+1}^2 \vert \filt_n) \leq K$ for some $K > 0$ almost surely and if $F$ is continuous with an isolated zero set, then $Z_n$ converges almost surely to a random variable with values in the zero set.
\end{proposition}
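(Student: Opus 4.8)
The plan is to check that $(Z_n)$ meets the hypotheses of Lemma~2.6 of~\cite{Pemantle} and then to upgrade its conclusion to almost sure convergence using that the zero set of $F$ is isolated. The decomposition~\eqref{stoch_app}, with $\Exp(\xi_{n+1}\mid\F_n)=0$ and $\sum_n|R_n|/n<\infty$, has been established just above, and the trajectory is bounded since $M(n)\le n$, so $0\le Z_n\le 1$; the only remaining hypothesis is the assumed bound $\Exp(\xi_{n+1}^2\mid\F_n)\le K$. First I would record that $Z_{n+1}-Z_n\to 0$ almost surely: the martingale $\mathcal M_n:=\sum_{k<n}\xi_{k+1}/k$ has summable conditional variances, $\sum_k k^{-2}\Exp(\xi_{k+1}^2\mid\F_k)\le K\sum_k k^{-2}<\infty$, hence is $L^2$-bounded and converges almost surely, so $\xi_{n+1}/n\to 0$; combined with $|F(Z_n)|/n\le 1/n\to 0$ (recall $|F(Z_n)|\le 1$) and $|R_n|/n\to 0$ (a convergent series has vanishing terms), \eqref{stoch_app} gives $Z_{n+1}-Z_n\to 0$.

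Next I would apply Lemma~2.6 of~\cite{Pemantle} to conclude that, almost surely, $F$ vanishes on the limit set $L:=\bigcap_m\overline{\{Z_n:n\ge m\}}$ of $(Z_n)$. Since $(Z_n)$ is bounded, $L$ is nonempty and compact, and since $Z_{n+1}-Z_n\to 0$ the set $L$ is connected. By hypothesis the zero set $\{F=0\}$ is isolated, hence discrete in the subspace topology, so its only connected subsets are singletons and the empty set; therefore $L=\{z^{\ast}\}$ with $F(z^{\ast})=0$, i.e.\ $Z_n\to z^{\ast}$ almost surely, where $z^{\ast}$ is a random variable with values in the zero set.

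The step I expect to be the main obstacle is the invocation of Lemma~2.6: one must verify that its hypotheses are precisely the ones at hand and, in particular, that the weak ``isolated zero set'' assumption is enough in place of whatever regularity of the equilibria that lemma demands. If Lemma~2.6 is stated only for finitely many equilibria, the reduction is harmless: $(Z_n)$ stays in the compact set $[0,1]$, the zero set is closed because $F$ is continuous, and a closed isolated subset of a compact set is finite, so the lemma applies on a compact neighbourhood of the range. It is also worth noting that in dimension one a continuous $F$ is automatically a gradient, $F=-V'$ with $V(x)=-\int_0^x F$, and $V(\{F=0\})$ is countable and hence has empty interior, which is the form in which the Lyapunov-type hypothesis behind Lemma~2.6 is usually required. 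The martingale convergence, the vanishing of the increments, and the connectedness of the limit set are all routine.
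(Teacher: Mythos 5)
The paper itself offers no proof of Proposition~\ref{prop:sa}: it is stated without argument as a corollary of Lemma~2.6 of~\cite{Pemantle}, and the surrounding text only verifies that the specific process $Z_n=M(n)/n$ satisfies the hypotheses. Your write-up correctly supplies the missing deduction from Pemantle's lemma, and it is the standard one. The chain of ideas is sound: the bound $\Exp(\xi_{n+1}^2\mid\F_n)\le K$ makes $\sum_k \xi_{k+1}/k$ an $L^2$-bounded martingale, so $\xi_{n+1}/n\to 0$, and together with $|F(Z_n)|\le 1$ and $|R_n|/n\to 0$ this gives $Z_{n+1}-Z_n\to 0$; the limit set of a bounded real sequence with vanishing increments is nonempty, compact and connected; Lemma~2.6 places that limit set inside $\{F=0\}$; and a set all of whose points are isolated is discrete, hence has no nontrivial connected subsets, so the limit set is a singleton in $\{F=0\}$. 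Your side remarks also anticipate the right concerns about the exact form of Pemantle's lemma: since $Z_n$ lives in a compact interval, the relevant closed discrete zero set is automatically finite, and in dimension one $F$ always admits a Lyapunov primitive, so whichever of the usual Lyapunov-type or finitely-many-equilibria hypotheses that lemma carries is satisfied here. In short, the proposal is a correct and complete reconstruction of the argument the paper leaves implicit.
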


As $0\leq M(n+1)-M(n)\leq 1$, we get
\begin{align*}
\frac{n+1}{n}|\xi_{n+1}|&=(n+1)\left|Z_{n+1}-\mathbb{E}(Z_{n+1}|\mathcal{F}_n)\right|\\
&=
|M(n+1)-\mathbb{E}(M(n+1)|\mathcal{F}_n)|\\
&=|(M(n+1)-M(n))-\mathbb{E}(M(n+1)-M(n)|\mathcal{F}_n)|\leq 1.
\end{align*}
So, the condition $\Exp[\xi_{n+1}^2 \vert \filt_n] \leq K$ from Proposition~\ref{prop:sa} holds.
Hence, we prove 
\begin{lemma}
\label{lem:zero_set}
If $\mathbb{E}d>2+\beta$, $Z_n$ converges almost surely to some random variable $\eta\in \{0,x_{\ast}\}$, and if $\mathbb{E}d\leq 2+\beta$, $Z_n$ converges almost surely to $0$. 
\end{lemma}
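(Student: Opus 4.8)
The plan is to apply Proposition~\ref{prop:sa}, since the excerpt has already done the real work: it has exhibited the stochastic approximation decomposition \eqref{stoch_app} with $F(x)=f(x)-x$, shown $\sum_n n^{-1}|R_n|<\infty$ a.s.\ (using Proposition~\ref{prop:persistent_hub} and \eqref{eq:f_diff}), and verified the uniform bound $\mathbb{E}(\xi_{n+1}^2\mid\filt_n)\le 1$. What remains is to identify the zero set of $F$ and check that it is isolated \emph{on the essential range of $Z_n$}. Since $P_n$ is a tree on $n+2$ vertices we have $M(n)\le n+1$, hence $Z_n\in[0,1+1/n]$, so it suffices to understand $F$ on a fixed interval $[0,b]$ with $1<b<2+\beta$ (such $b$ exists as $\beta>-1$, and for the regime of interest $\beta>0$).

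For $\mathbb{E}d>2+\beta$ I would use the facts already recorded before the lemma: $f(0)=0$, $f^{\prime}(0)=\mathbb{E}d/(2+\beta)>1$, $f$ is concave on $[0,1]$, and $f(x)<1$ for $x\in[0,2+\beta)$, so in particular $f(1)<1$. Consequently $F=f-x$ is concave on $[0,1]$ with $F(0)=0$, $F^{\prime}(0)=\mathbb{E}d/(2+\beta)-1>0$ and $F(1)<0$; concavity then forces a unique zero $x^{\ast}\in(0,1)$ with $F>0$ on $(0,x^{\ast})$ and $F<0$ on $(x^{\ast},1]$, while on $(1,b]$ we have $F(x)=f(x)-x<1-x<0$. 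Thus the zero set of $F$ on $[0,b]$ is $\{0,x^{\ast}\}$, finite and hence isolated, and Proposition~\ref{prop:sa} gives $Z_n\to\eta$ a.s.\ with $\eta\in\{0,x^{\ast}\}$; this $x^{\ast}$ is the same as in Theorem~\ref{thm:max_degree} after rescaling $f$ by $2+\beta$.

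For $\mathbb{E}d\le 2+\beta$ I would use the bound already established, $g(x)=f(x)/x<\mathbb{E}d/(2+\beta)\le 1$ for $x\in(0,1)$, so $F(x)=x\bigl(g(x)-1\bigr)<0$ on $(0,1)$, and $F(x)<1-x\le 0$ on $[1,b]$. Together with $F(0)=0$, the zero set of $F$ on $[0,b]$ is the single point $\{0\}$, which is isolated; Proposition~\ref{prop:sa} then yields $Z_n\to 0$ a.s.

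I expect no serious obstacle remaining: the analytic content lies in the estimates already in place, and the only point requiring care is that the hypothesis ``isolated zero set'' of Proposition~\ref{prop:sa} must be checked on the whole essential range of $Z_n$, not merely near $0$ --- which is precisely why concavity of $f$ on $[0,1]$ plus $f<1$ on $[0,2+\beta)$ is invoked, to exclude spurious fixed points of $f$ above $x^{\ast}$ (respectively above $0$). Note finally that Proposition~\ref{prop:sa} directly delivers almost sure convergence to a random variable valued in the discrete zero set, so no separate non-trapping argument is needed.
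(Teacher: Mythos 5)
Your proof is correct and follows essentially the same route as the paper: the stochastic approximation decomposition, the $R_n$ and $\xi_{n+1}$ bounds, and the invocation of Proposition~\ref{prop:sa} are exactly what the paper establishes in the paragraphs preceding the lemma, and your identification of the zero set of $F=f-x$ via concavity of $f$, $f(0)=0$, $f'(0)=\mathbb{E}d/(2+\beta)$, and $f<1$ on $[0,2+\beta)$ matches the paper's discussion. Your extra care in checking that the zero set is isolated on the full essential range of $Z_n$ (rather than only near the origin) is a sensible clarification, though since $P_n$ is a tree on $n+1$ vertices with $n$ edges one actually has $M(n)\le n$, so $Z_n\in[0,1]$ and the restriction to $[0,1]$ already suffices.
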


\subsection{Case $\mathbb{E}d>2+\beta$}
$ $

At first, consider the case $\mathbb{E}d>2+\beta$. Due to Lemma~\ref{lem:zero_set}, it is enought to prove that $M(n)/n$ does not converge to zero almost surely. 
Therefore, it suffices to prove the following.
\begin{lemma}
\label{lem:interval_est}
For any $n_0>0$ and $0<\epsilon<x^{\ast}$  there is almost surely finite random variable $T$ ($T$ depends on $n_0$ and $\epsilon$), $T\geq n_0$, such that $\Pr(x_{\ast}-\epsilon\leq M(T)/T)\to 1$ as $n_0\to\infty$.
\end{lemma}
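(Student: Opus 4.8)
The plan is to combine Lemma~\ref{lem:zero_set} with a drift argument for the process $X_n:=\log M(n)-\log n=\log Z_n$, which is bounded above by $0$ since $M(n)\le n$. The point is that $X_n$ has a strictly positive drift of order $1/n$ as long as $Z_n$ stays bounded away from $x_{\ast}$ and $L(n)=1$. Indeed, writing $p_n=f(w_n)$ with $w_n=(M(n)+\beta)/(n+\beta/(2+\beta))$ as in \eqref{eq:p_n_f}, and recalling that $g=f/x$ satisfies $g(x)>1$ on $[0,x_{\ast})$ by \eqref{eq:g_ast}, one has on $\{L(n)=1\}$, using $\log(1+u)\ge u-u^2/2$ and $\log(1+u)\le u$ for $u\ge 0$,
\[
\E(X_{n+1}-X_n\mid\F_n)=p_n\log\!\Big(1+\tfrac1{M(n)}\Big)-\log\!\Big(1+\tfrac1n\Big)\ \ge\ \frac{p_n}{M(n)}\Big(1-\tfrac1{2M(n)}\Big)-\frac1n .
\]
Since $p_n/M(n)=g(w_n)\,(M(n)+\beta)\big/\big(M(n)(n+\beta/(2+\beta))\big)$ and $M(n)\le n<(2+\beta)n$ (here $\beta>0$), the factor in front of $g(w_n)$ is at least $1/n$; and for $n$ large and $Z_n\le x_{\ast}-\epsilon$ one has $w_n\le x_{\ast}-\epsilon/2$, hence $g(w_n)\ge 1+2c$ for a constant $c=c(\epsilon)>0$ by continuity of $g$ and the fact that $g>1$ on $[0,x_{\ast})$. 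Combining this with $M(n)\to\infty$ a.s.\ (Lemma~\ref{lem:starting_low_bound}), there are a deterministic $C_1$ and a fixed threshold $h>0$ so that $\E(X_{n+1}-X_n\mid\F_n)\ge c/n$ whenever $n\ge C_1$, $L(n)=1$, $M(n)\ge h$ and $Z_n\le x_{\ast}-\epsilon$.

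Now fix $n_0\ge C_1$ and let $\tau=\inf\{n\ge n_0:Z_n>x_{\ast}-\epsilon\}$ and $\eta_{n_0}=\inf\{n\ge n_0:L(n)>1\}$ (cf.\ \eqref{eq:eta_C}). On $\{M(n_0)\ge h\}$ set $\sigma=\tau\wedge\eta_{n_0}$, and set $\sigma=n_0$ otherwise; since $M(n_0)$ is $\F_{n_0}$-measurable, $\sigma$ is a stopping time. The stopped process $(X_{n\wedge\sigma})_{n\ge n_0}$ is a submartingale (its conditional increments are nonnegative by the previous paragraph, using that on $\{k<\sigma\}$ all four conditions hold), is bounded above by $0$, and has increments bounded in absolute value by $\log 2$; hence it converges a.s.\ to a finite limit. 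Consider its Doob decomposition $X_{n\wedge\sigma}=X_{n_0}+M'_n+A_n$ with $A_n$ predictable nondecreasing and $M'_n$ a martingale started at $0$ with bounded increments: on $\{\sigma=\infty\}$ we would have $A_\infty\ge\sum_{k\ge n_0}c/k=\infty$, forcing $M'_n\to-\infty$, which is impossible since a martingale with bounded increments a.s.\ either converges to a finite limit or has $\limsup M'_n=+\infty$. Therefore $\Pr(\sigma=\infty)=0$, and since $\sigma=\tau$ on $\{M(n_0)\ge h\}\cap\{\eta_{n_0}=\infty\}$,
\[
\Pr(\tau<\infty)\ \ge\ \Pr\big(\{M(n_0)\ge h\}\cap\{\eta_{n_0}=\infty\}\big)\ \to\ 1\quad\text{as }n_0\to\infty,
\]
using $M(n)\to\infty$ a.s.\ and $\Pr(\eta_{n_0}=\infty)=\Pr(D(n_0))\to1$ (Proposition~\ref{prop:persistent_hub}).

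It then remains to put $T=\tau$ on $\{\tau<\infty\}$ and $T=n_0$ otherwise: $T\ge n_0$, $T$ is finite, and on $\{\tau<\infty\}$ we have $M(T)/T=Z_\tau>x_{\ast}-\epsilon$, so $\Pr(x_{\ast}-\epsilon\le M(T)/T)\ge\Pr(\tau<\infty)\to1$, which is the claim. (Together with Lemma~\ref{lem:zero_set} and dominated convergence, this excludes the event $\{Z_n\to0\}$ up to probability $0$, completing the case $\E d>2+\beta$.) I expect the main technical obstacle to be the bookkeeping in the drift estimate — keeping the $O(1/n)$ corrections coming from the $\beta/n$ terms inside $w_n$, from $\log(1+1/M(n))$ versus $1/M(n)$, and from the fact that $M(n)$ is a priori only polynomially large, all uniformly dominated by the gain $c/n$ — together with the reduction of the non-adapted event $\{Z_n\le x_{\ast}-\epsilon\ \forall n\ge n_0\}$ to the genuine stopping time $\sigma$ so that the submartingale machinery applies.
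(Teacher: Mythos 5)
Your proof is correct and arrives at the same stopping-time-exhaustion conclusion as the paper, but by a genuinely different transform and terminal argument. The paper's proof stops at the same trio of events ($\phi_{n_0}$, $\eta_{n_0}$, and an $\alpha_{n_0,\delta}$ controlling the $g(\cdot)$ threshold together with the $O(M(n)^{-1})$ error), but instead of working with $\log Z_n$ it shows that $C_n/M(n)$ is a supermartingale for a deterministic sequence $C_n$ with $C_{n+1}=(1+(1+\delta)/n)C_n$, applies Lemma~\ref{lem:numbers} to get $C_n\sim n^{1+\delta}$, and then invokes Doob's convergence to deduce $M(n)\gtrsim n^{1+\delta}$ up to the stopping time, contradicting the trivial bound $M(n)\le 2n$. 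You instead take $X_n=\log Z_n$, which is bounded above, show the stopped $X_n$ is a submartingale whose predictable compensator has increments $\geq c/n$ on $\{\sigma=\infty\}$, and close via the bounded-increment martingale dichotomy. Both proofs hinge on the same drift fact $g>1+2\delta$ on $[0,x_{\ast}-\epsilon]$ from \eqref{eq:g_ast}; the paper packages the $O(1/M(n))$ error into a third stopping time $\alpha_{n_0,\delta}$ whose failure probability vanishes as $n_0\to\infty$, while you absorb it by conditioning on $\{M(n_0)\geq h\}$ and using monotonicity of $M$, which is a slightly tidier way to achieve the same effect. The one small slip is the parenthetical ``here $\beta>0$'' used to claim the factor in front of $g(w_n)$ is $\geq 1/n$: the model only assumes $\beta>-1$, and for $\beta\in(-1,0)$ that factor is $\tfrac1n\bigl(1+O(1/M(n))\bigr)$ with the $O$-term negative. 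This is harmless — for $M(n)\geq h$ the correction can be absorbed into the same slack $c$ already built into $g(w_n)\geq 1+2c$ — but you should say so rather than restrict $\beta$; the structure of your argument (the threshold $h$) already handles it.
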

Before starting the proof, let describe its main idea. When $\frac{M(n)}{n}<x_{\ast}-\epsilon$, $\frac{p_n}{M(n)}>(1+\alpha)/n$ for some $\alpha>0$. Therefore, for $n$ such that $\frac{M(n)}{n}<x_{\ast}-\epsilon$ we get that $M(n)$ grows like $n^{1+\alpha}$ and, hence, $M(n)/n$ would exceed $x_{\ast}-\epsilon$ at some finite time. We formalize this condition on $n$ by introducing stopping times $\phi_{n_0}$, which would be the first time (since $n_0$) that it breaks.
\begin{proof}

Let $n_0>0, 0<\epsilon<x^{\ast}$. Recall that $p_n$ is the probability that $M(n+1) = M(n) + 1$ conditionally on $\filt_n.$ From \eqref{eq:g(x)} for $n$ such that $n_0 \leq n \leq \eta_{n_0}$ we have
\begin{equation}
\label{eq:g}
\begin{aligned}
\frac{p_{n}}{M(n)}&= \frac{1+\frac{\beta}{M(n)}}{n\left(1+\frac{\beta}{(2+\beta)n}\right)}g\left(\frac{M(n)+\beta}{n+\beta/(2+\beta)}\right)\\
&=\frac{1}{n}g\left(\frac{M(n)+\beta}{n+\beta/(2+\beta)}\right)\left(1+O\left(\frac{1}{M(n)}\right)\right).
\end{aligned}
\end{equation}

Introduce stopping times
$$\phi_{n_0}=\inf\{n\geq n_0:x_{\ast}-\epsilon\leq M_{n}/n\}.$$
Note that $\phi_{n_0}$ is the first moment since $n_0$ that satisfies the statement of the lemma. Therefore, it is sufficient to prove that $\Pr(\phi_{n_0}<\infty)\to 1$ as $n_0\to\infty$.
Consider the expectation
\begin{align*}
\Exp\!\left(\!\left.\1\{n\!<\!\eta_{n_0}\!\wedge\!\phi_{n_0}\}\!\frac{M(n)}{M\!(n\!+\!1)} \!\right|\! \mathcal{F}_{n}\!\right)\!&=\1\{n<\eta_{n_0}\wedge\phi_{n_0}\}\Exp\!\left(\!\left.\frac{M(n)}{M(n+1)} \right| \mathcal{F}_{n}\!\right)\\
&=\!\!
\1\!\{n\!<\!\eta_{n_0}\!\wedge\!\phi_{n_0}\}\!\left(\!p_{n}\frac{M(n)}{M(n)+1}+1-p_{n}\!\right)\\
&=\!\!
\1\!\{n\!<\!\eta_{n_0}\!\wedge\!\phi_{n_0}\}\!\left(\!p_{n}\!\left(\!1-\frac{1}{M(n)+1}\!\right)\!+\!1\!-\!p_{n}\!\right)\\
&=\!\!
\1\!\{n\!<\!\eta_{n_0}\!\wedge\!\phi_{n_0}\}\!\left(1-\frac{p_{n}}{M(n)}(1+O(M(n)^{-1}))\right)\\
&=\!\! \1\!\{n\!<\!\eta_{n_0}\!\wedge\!\phi_{n_0}\}\!\!\left(\!1\!-\!\frac{1}{n}g\!\left(\!\frac{M\!(n)}{n}\!\right)\!(1\!+\!O(\!M(n)^{\!-\!1}\!)\!)\!\right)\!.
\end{align*}
The last equality holds since \eqref{eq:g} holds and $g(x)$ has a bounded derivative on $[0,1]$.
Note that the term $O(M(n)^{-1})$ from last equality is $\mathcal{F}_n$-countable and by Lemma~\ref{lem:starting_low_bound} turns to $0$ as $n\to\infty$. Introduce stopping times  
$$\alpha_{n_0,c}=\inf\left\{n\geq n_0:g\left(\frac{M(n)}{n}\right)\left(1+O\left(M(n)^{-1}\right)\right)<1+c\right\},\;c>0.$$
From \eqref{eq:g_ast} we have that for any $\epsilon>0$ there is $\delta>0$ so that $g(x)>1+2\delta$ if $0\leq x\leq x_{\ast}-\epsilon$. Then $\Pr(\alpha_{n_0,\delta}=\infty)\to 1$ as $n_0\to\infty$. For $n_0\leq n$ we have
$$
\Exp\left(\left.\frac{1}{M(n\!+\!1)}\1\{n\!<\!\eta_{n_0}\!\wedge\!\phi_{n_0}\!\wedge\!\alpha_{n_0,\delta}\} \right| \mathcal{F}_{n} \right)<\frac{(1\!-\!(1\!+\!\delta)/n)}{M(n)}\1\{n\!<\!\eta_{n_0}\!\wedge\!\phi_{n_0}\!\wedge\!\alpha_{n_0,\delta}\}.
$$
Set
$$C_{n+1}=(1+(1+\delta)/n)C_{n},\;n\geq n_{0},\;C_{n_0}=1$$
For $n\geq n_0$ introduce $A(n) = C_n/M(n)$. Then $A(n\wedge\eta_{n_0}\wedge\phi_{n_0}\wedge\alpha_{n_0,\delta})$ is a supermartingale for $n\geq n_0$.
By Lemma~\ref{lem:numbers}, we have that $C_nn^{-1-\delta}$ converges to a positive limit, and,
by Doob's theorem, $A(n\wedge\phi_{n_0}\wedge \eta_{n_0}\wedge\alpha_{n_0,\delta})$ tends to a finite limit with probability 1.
Thus, there is a random constant $B > 0$ so that $M(n) \geq B n^{1+\delta}$ almost surely for all $n_0\leq n \leq \phi_{n_0} \wedge \eta_{n_0}\wedge\alpha_{n_0,\delta}.$  On the other hand, $M(n) \leq 2n,$ and so $\phi_{n_0} \wedge \eta_{n_0}\wedge\alpha_{n_0,\delta} < \infty$ almost surely.  Therefore, on the event $\{\eta_{n_0}\wedge\alpha_{n_0,\delta}= \infty\},$ we have $\phi_{n_0} < \infty$ almost surely. Since $\Pr(\eta_{n_0}\wedge\alpha_{n_0,\delta}=\infty)\to 1$ as $n_0\to\infty$, we have $\Pr(\phi_{n_0} < \infty)\to 1$ as $n\to\infty$. Hence, we could set $T=\phi_{n_0}\1\{\phi_{n_0}<\infty\}+n_0\1\{\phi_{n_0}=\infty\}<\infty$.
\end{proof}

\subsection{Case $\mathbb{E}d\leq 2+\beta$}
$ $

Second, consider the case $\mathbb{E}d\leq 2+\beta$. The idea of the proof is similar to the previous case. Instead of stopping times $\phi_{n_0}$, $\eta_{n_0}$ and $\alpha_{n_0,\delta}$ we would use more complex stopping times $\nu_{n_0,\epsilon}$ to construct supermartingales.  In addition we consider Taylor expansion (formula \eqref{eq:g_m_j}), and then use convergence $M(n)/n\to 0$ almost surely and condition $m_j<C^j$ for some $C>0$ to estimate its tail.

Fix $n_0>0$, $\delta>0$, $n>n_0$. Consider the expectation
\begin{align*}
\mathbb{E}\left(\left.\1\{n<\eta_{n_0}\}\frac{M(n+1)/(n+1)^{\delta}}{M(n)/n^{\delta}}\right|\mathcal{F}_n\right)
&=\frac{M(n)+p_n}{M(n)}\frac{n^{\delta}}{(n+1)^{\delta}}\1\{n<\eta_{n_0}\}\\
&=\left(1+\frac{p_n}{M(n)}\right)\frac{1}{(1+1/n)^{\delta}}\1\{n<\eta_{n_0}\}.
\end{align*}
The factor $\1\{n<\eta_{n_0}\}$ is needed to remove $L(n)$ from formula \eqref{p_n} for $p_n$. We now obtain the expansion of $p(n)/M(n)$ by powers of $M(n)/n$. From \eqref{eq:g_m_j} and \eqref{eq:g(x)}  (for $n>n_0$) we have
\begin{align*}
\1\{n<\eta_{n_0}\}\frac{p_n}{M(n)}&= \1\{n<\eta_{n_0}\} \frac{M(n)+\beta}{M(n)\left(n+\frac{\beta}{2+\beta}\right)} g\left(\frac{M(n)+\beta}{n+\frac{\beta}{2+\beta}}\right)\\
&=
\1\{n<\eta_{n_0}\}\frac{1+\beta/M(n)}{(2+\beta)n+\beta} \sum_{j=1}^{\infty}\!\left(\!-\frac{M(n)\!+\!\beta}{(2\!+\!\beta)n\!+\!\beta}\!\right)^{j-1}m_j\\
&=
\1\{n<\eta_{n_0}\}\frac{1}{(2+\beta)n} \left(\frac{1\!+\!\beta/M(n)}{1\!+\!\beta/((2\!+\!\beta)n)}\right)\left(\mathbb{E}d-\frac{M(n)+\beta}{n+\beta/(2+\beta)}\right.\\
&\times
\left.\left(\frac{m_2}{2+\beta}+\sum_{j=3}^{\infty} \left(-\frac{M(n)+\beta}{n+\beta/(2+\beta)}\right)^{j-2}\frac{m_j}{(2+\beta)^{j-1}}\right)\right).
\end{align*}
By Lemma~\ref{lem:zero_set}, $\frac{M(n)+\beta}{n+\beta/(2+\beta)}\to 0$ almost surely as $n\to\infty$. Thus, by our conditions on $d$, for any fixed constant $c>0$
$$\mathbb{P}\left(\left|\left(-\frac{M(n)+\beta}{n+\beta/(2+\beta)}\right)^{j-2}\frac{m_j}{(2+\beta)^{j-1}}\right|<c^{j-2}\text{ for all }j\geq 3\text{ and }n>N\right)\to 1$$
as $N\to\infty$. Therefore, 
\begin{equation}
\label{eq:p_n_est}
\begin{aligned}
\1\{n<\eta_{n_0}\}\frac{p_n}{M(n)}&=
\1\{n<\eta_{n_0}\}\frac{1}{(2+\beta)n}
\left(1+O\left(\frac{1}{M(n)}\right)\right) \\
&\times \left(\mathbb{E}d-\frac{M(n)}{n}\left(1+O\left(\frac{1}{M(n)}\right)\right)\left(\frac{m_2}{2+\beta}+o(1)\right)\right)\\
&=\1\{n<\eta_{n_0}\}\frac{\mathbb{E}d}{(2+\beta)n}(1+o(1))\text{ as } n\to\infty,
\end{aligned}
\end{equation}
where $o(1)$ denotes some random variable that converges almost surely to $0$ as $n\to\infty$. Hence,
$$
\begin{aligned}
&\mathbb{E}\left(\left.\frac{M(n+1)/(n+1)^{\delta}}{M(n)/n^{\delta}}\1\{n<\eta_{n_0}\}\right|\mathcal{F}_n \right)
\\
&=
\frac{1}{(1+1/n)^{\delta}}\left(1+\frac{\mathbb{E}d}{(2+\beta)n}(1+o(1))\right)\1\{n<\eta_{n_0}\}
\\
&=
\left(1+\frac{\frac{\mathbb{E}d}{(2+\beta)}-\delta}{n}(1+o(1))\right)\1\{n<\eta_{n_0}\}\to 1\;\text{a.s. as }n_0\to\infty.
\end{aligned}
$$
Moreover, for any $\epsilon\in(0,\frac{\mathbb{E}d}{2+\beta}),$
$$\mathbb{P}\left(\mathbb{E}\left( \left.\frac{M(n+1)/(n+1)^{\mathbb{E}d/(2+\beta)-\epsilon}}{M(n)/n^{\mathbb{E}d/(2+\beta)-\epsilon}}\right|\mathcal{F}_n\right)>1,\, \text{for all}\, n>n_0\right)$$
$$\geq\mathbb{P}\left(\bigcap_{n>n_0}\left\{\mathbb{E}\left(\left.\1\{n<\eta_{n_0}\} \frac{M(n+1)/(n+1)^{\mathbb{E}d/(2+\beta)-\epsilon}}{M(n)/n^{\mathbb{E}d/(2+\beta)-\epsilon}}\right|\mathcal{F}_n\right)>1\right\}\right)$$
$$\geq\mathbb{P}\left(\{\eta_{n_0}=\infty\}\bigcap\left(\bigcap_{n>n_0}\left\{1+\frac{\epsilon}{n}(1+o(1))>1\right\}\right)\right)\to 1 \text{ as }n_0\to\infty.$$
Similarly,
$$\mathbb{P}\left(\mathbb{E}\left( \left. \frac{M(n+1)/(n+1)^{\mathbb{E}d/(2+\beta)+\epsilon}} {M(n)/n^{\mathbb{E}d/(2+\beta)+\epsilon}}\right|\mathcal{F}_n\right)<1,\, \text{for all}\, n>n_0\right)\to 1,\text{ as } n_0\to\infty.$$
Let $A(n)=\frac{M(n)}{n^{\mathbb{E}d/(2+\beta)+\epsilon/2}}$ and $B(n)=\frac{n^{\mathbb{E}d/(2+\beta)-\epsilon/2}}{M(n)}$, which are nearly supermartingales. Introduce stopping times
$$\nu_{n_0,\epsilon} =\inf\left\{n>n_0:\mathbb{E}\left(\left. \frac{M(n+1)/(n+1)^{\mathbb{E}d/(2+\beta)-\epsilon}} {M(n)/n^{\mathbb{E}d/(2+\beta)-\epsilon}}\right|\mathcal{F}_n\right)\leq 1, \text{or}\right.$$
$$\left.\mathbb{E}\left(\left. \frac{M(n+1)/(n+1)^{\mathbb{E}d/(2+\beta)+\epsilon}} {M(n)/n^{\mathbb{E}d/(2+\beta)+\epsilon}}\right|\mathcal{F}_n\right)\geq 1 \right\}.$$
Note that $\mathbb{P}(\nu_{n_0,\epsilon}=\infty)\to 1$ as $n_0\to\infty$. Then $A(n\wedge\nu_{n_0,\epsilon})$ and $B(n\wedge\nu_{n_0,\epsilon})$ are supermartingales, and from Doob's theorem $$\frac{M(n)}{n^{\frac{\mathbb{E}d}{2+\beta}-\epsilon}}\to \infty \text{ and } \frac{M(n)}{n^{\mathbb{E}d/(2+\beta)+\epsilon}}\to 0 \text{ a.s., }n\to\infty,$$
which implies our theorem for $\mathbb{E}d<2+\beta$, and give us the following asymptotic: 
$$\frac{M(n)}{n^{1-\delta}}\to \infty\text{ a.s. for any }\delta>0\text{ if }\mathbb{E}d=2+\beta.$$
In particular, if we consider stopping times
$$\tau_{\epsilon} (n_0) = \inf_{n > n_0} \{ n ~:~ M(n) < \epsilon n^{0.76} \},$$
then $\mathbb{P}(\tau_{\epsilon}(n_0)<\infty)\to 0$ as $n_0\to\infty$ (below, we use this statement in the proof of the theorem in the case $\mathbb{E}d=2+\beta$).

\subsection{case $\mathbb{E}d=2+\beta$}
$ $

Finally, consider the case $\mathbb{E}d=2+\beta$. The proof in this case is similar to the proof in the case $d=2$ in \cite{MP14}. In this case, we need to consider additional term in Taylor expansion and build different martingales.

For fixed $c>0$, consider the following set of scaling functions of $M(n)$:
\begin{equation}
\begin{aligned}
Q_n^c &= \exp( cn / M(n) ) / n, \\
U_n^c &= n\exp( -cn / M(n) ).
\end{aligned}
\end{equation}

\begin{lemma}
\label{lem:mgs}
Let $\epsilon > 0$ and $C>0$ be fixed numbers.
\begin{enumerate}
\item
For each $c<(2+\beta)^2/m_2,$ there is a constant $n_1 = n_1(C,c,\epsilon) \geq C$ so that 
\(
Q^c_{n \wedge \tau_\epsilon(C) \wedge \eta_C},~n \geq n_1
\)
is a supermartingale.
\item
For each $c>(2+\beta)^2/m_2,$ there is a constant $n_2 = n_2(C,c,\epsilon) \geq C$ so that
\(
U^c_{n \wedge \tau_\epsilon(C) \wedge \eta_C},~n \geq n_2
\)
is a supermartingale.
\end{enumerate}
\end{lemma}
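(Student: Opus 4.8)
The plan is to compute the one–step ratios $\mathbb{E}(Q^c_{n+1}\mid\mathcal{F}_n)/Q^c_n$ and $\mathbb{E}(U^c_{n+1}\mid\mathcal{F}_n)/U^c_n$ and show they are $\le 1$ once $n$ is large, on the event $\{n<\tau_\epsilon(C)\wedge\eta_C\}$; on the complement the stopped processes do not move, so that suffices. Fix $n$ with $n<\tau_\epsilon(C)\wedge\eta_C$. Then $M(n)\ge\epsilon n^{0.76}$, and $L(n)=1$, so by \eqref{eq:p_n_f}, $p_n=f(\tilde x_n)$ with $\tilde x_n:=\frac{M(n)+\beta}{n+\beta/(2+\beta)}\in(0,2+\beta]$. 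Since $M(n+1)-M(n)$ equals $1$ with probability $p_n$ and $0$ otherwise, a direct computation gives
\[
\frac{\mathbb{E}(Q^c_{n+1}\mid\mathcal{F}_n)}{Q^c_n}=\tfrac{n}{n+1}\left(p_n e^{a_n}+(1-p_n)e^{b_n}\right),
\]
\[
\frac{\mathbb{E}(U^c_{n+1}\mid\mathcal{F}_n)}{U^c_n}=\tfrac{n+1}{n}\left(p_n e^{-a_n}+(1-p_n)e^{-b_n}\right),
\]
with $a_n=\frac{c(n+1)}{M(n)+1}-\frac{cn}{M(n)}=\frac{c(M(n)-n)}{M(n)(M(n)+1)}$ and $b_n=\frac{c}{M(n)}$. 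The bound $M(n)\ge\epsilon n^{0.76}$ forces $|a_n|,|b_n|=O(n^{-0.76})$ \emph{uniformly} on the event, which is what licenses a second–order Taylor expansion of the exponentials.

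Expanding and collecting powers, the $Q$–ratio becomes $1-\frac{1}{n+1}+\frac{c}{M(n)(M(n)+1)}\left[(M(n)+1)-(n+1)p_n\right]+O(n^{-3/2})$, and the $U$–ratio is the same with $-\frac{1}{n+1}$ replaced by $+\frac1n$ and a minus sign in front of the bracketed term. Next I would rewrite $(n+1)p_n$: since $n+\beta/(2+\beta)=(M(n)+\beta)/\tilde x_n$ we have $n+1=(M(n)+\beta)/\tilde x_n+\tfrac{2}{2+\beta}$, and with $g=f/x$ and \eqref{eq:g_m_j} one obtains
\[
(M(n)+1)-(n+1)p_n=\frac{(M(n)+\beta)^2}{\,n+\beta/(2+\beta)\,}\,\kappa(\tilde x_n)+O(1),\qquad \kappa(x):=\frac{x-f(x)}{x^2},
\]
the $O(1)$ being negligible because $(M(n)+\beta)^2/n\ge\epsilon^2 n^{0.52}\to\infty$ deterministically. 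Hence the $Q$–ratio is $1+\tfrac1n\left(c\,\kappa(\tilde x_n)-1\right)\left(1+O(n^{-0.76})\right)+O(n^{-3/2})$, and the $U$–ratio is $1+\tfrac1n\left(1-c\,\kappa(\tilde x_n)\right)\left(1+O(n^{-0.76})\right)+O(n^{-3/2})$.

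The sign of the correction is governed by $\kappa$. From the defining series, $f$ is concave on $[0,2+\beta]$ with $f'(0)=\mathbb{E}d/(2+\beta)=1$ and $f'''\ge0$ on $[0,2+\beta]$ (differentiate term by term). Writing $h(x)=x-f(x)$ we then have $h(0)=h'(0)=0$, $h''\ge0$, $h'''=-f'''\le0$, so $(xh'-2h)''=xh'''\le0$ and $(xh'-2h)(0)=0$ force $xh'(x)-2h(x)\le0$ on $[0,2+\beta]$; thus $\kappa$ is non-increasing on $(0,2+\beta]$, with $\kappa(0^+)=m_2/(2+\beta)^2$ and $\kappa(2+\beta)=(1+\beta)/(2+\beta)^2$ (using $f(2+\beta)=1$). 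Since $\tilde x_n\in(0,2+\beta]$ always, for part (1) I bound $\kappa(\tilde x_n)\le m_2/(2+\beta)^2$; as $c<(2+\beta)^2/m_2$ the correction in the $Q$–ratio is then $\le\tfrac1n\left(\tfrac{cm_2}{(2+\beta)^2}-1\right)(1+o(1))<0$ for all $n\ge n_1=n_1(C,c,\epsilon)\ge C$, proving that $Q^c_{n\wedge\tau_\epsilon(C)\wedge\eta_C}$ is a supermartingale.

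For part (2) the correction in the $U$–ratio is negative exactly when $c\,\kappa(\tilde x_n)>1$. Since $c>(2+\beta)^2/m_2$ we have $c\,\kappa(0^+)>1$, and because $\kappa$ is non-increasing it suffices that $\tilde x_n\le\delta_0$ for a fixed small $\delta_0=\delta_0(c)>0$; by Lemma~\ref{lem:zero_set}, $\tilde x_n\to0$ almost surely, so this holds for all large $n$. (To keep the threshold deterministic one additionally stops at $\inf\{n>C:\tilde x_n>\delta_0\}$, a time equal to $\infty$ with probability tending to $1$ as $C\to\infty$, so it does not affect later applications; alternatively $n_2$ is allowed to depend on the sample path through this last time.) The two steps I expect to be delicate are precisely the uniformity issues: verifying that \emph{every} error term collected during the expansion is $o(1/n)$ uniformly on $\{n<\tau_\epsilon(C)\wedge\eta_C\}$ — which is exactly what the a priori lower bound $M(n)\ge\epsilon n^{0.76}$ from $\tau_\epsilon(C)$ is for — and establishing the monotonicity of $\kappa(x)=(x-f(x))/x^2$, which is the structural input that lets $n_1$ in part (1) be chosen independently of $\omega$ and pins the critical constant at $(2+\beta)^2/m_2$.
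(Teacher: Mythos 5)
Your proposal is correct and, for part (1), follows a genuinely different and in fact cleaner route than the paper. The paper controls the term $(M(n)+1)-(n+1)p_n$ by the Taylor expansion \eqref{eq:p_n_est} of $p_n/M(n)$, whose tail-sum error is \emph{random}: it relies on $M(n)/n\to 0$ a.s.\ (from Lemma~\ref{lem:zero_set}) to make the series remainder small, so the resulting ``$o(1)$'' depends on the sample path and the claimed deterministic $n_1$ is slightly delicate. You instead isolate $\kappa(x)=(x-f(x))/x^2$, and the observation that $(xh'-2h)''=xh'''\le 0$ with $h=x-f$, $h(0)=h'(0)=0$ (using $\E d=2+\beta$), $h''\ge 0$, $h'''=-f'''\le 0$ on $[0,2+\beta]$ gives monotone non-increase of $\kappa$, hence the \emph{deterministic} bound $\kappa(\tilde x_n)\le\kappa(0^+)=m_2/(2+\beta)^2$ valid for every $n$. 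This pins the critical constant $(2+\beta)^2/m_2$ by an honest inequality rather than an asymptotic expansion and makes $n_1$ cleanly deterministic. For part (2) the inequality runs the wrong way, and you correctly flag that one must either add the stopping time $\inf\{n>C:\tilde x_n>\delta_0\}$ (whose probability of being finite tends to $0$ as $C\to\infty$, so it does not affect the downstream use of the lemma) or allow $n_2$ to be random; the paper has the same latent issue hidden in the random $o(1)$ and does not spell out the fix. One small slip: you claim $|a_n|=O(n^{-0.76})$ uniformly, but since $a_n=c(M(n)-n)/(M(n)(M(n)+1))$ and $|M(n)-n|$ can be of order $n$, the uniform bound from $M(n)\ge\epsilon n^{0.76}$ is only $|a_n|=O(n^{-0.52})$. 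This is harmless: what enters the error is $p_n a_n^2$, and using $p_n=f(\tilde x_n)=O(M(n)/n)$ one gets $p_n a_n^2=O(n/M(n)^3)=O(n^{-1.28})=o(1/n)$, so the second-order Taylor expansion is still justified.
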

\begin{proof}[Proof of Lemma~\ref{lem:mgs}]

\noindent \emph{Proof of (1):} 
Recall that $M(n+1)$ equals eihter $M(n)+1$ (with conditional probability $p_n$) or $M(n)$. Hence, by Taylor's theorem for corresponding exponents,
$$
\Exp\left(\left. \1\{n<\eta_C\}\frac{Q^c_{n+1}}{Q^c_n} \right| \mathcal{F}_n \right)$$
$$=\1\{n<\eta_C\}\frac{n}{n+1}\left(
e^{\left( \frac{c}{M(n)}\right)}(1-p_n) + p_ne^{\left(c\tfrac{n+1}{M(n)+1} - \tfrac{cn}{M(n)}\right)}
\right) $$
$$=\1\{n<\eta_C\}\left( 1-\frac{1}{n} +\frac{c}{M(n)} +cp_n\left(\frac{-1}{M(n)} + \frac{M(n) - n}{M(n)(M(n)+1)}\right)\right)$$
$$+\1\{n<\eta_C\}O\left(\frac{1}{M^2(n)} + \frac{n^2p_n}{M^4(n)}\right).$$
As $p_n\leq 1$ and, for $n<\tau_{\epsilon}(C)$, $M(n)/n^{3/4}\to \infty$, the latter error term is $o(1/n).$  By (\ref{eq:p_n_est}), we get
$$\Exp\left(\left. \1\{n<\eta_C\wedge\tau_{\epsilon}(C)\}\frac{Q^c_{n+1}}{Q^c_{n}} \right| \mathcal{F}_n \right)
= \1\{n<\eta_C\wedge\tau_{\epsilon}(C)\}\left(1-\frac{1}{n} +\frac{c}{M(n)} \right.$$
$$-\left.c\left(\frac{n}{M(n)}\right) \left(\frac{1}{(2+\beta)n}\left(\mathbb{E}d-\frac{M(n)}{n}\left(\frac{m_2}{(2+\beta)}+o(1)\right)\right)\right) +O\left(\frac{1}{n^{1.001}}\right)\right)$$ 
$$= \1\{n<\eta_C\wedge\tau_{\epsilon}(C)\}\left(1-\frac{1}{n} +\frac{cm_2+o(1)}{(2+\beta)^2n}+O\left(\frac{1}{n^{1.001}}\right)\right).$$
Hence, when $c<\frac{(2+\beta)^2}{m_2}$, we may find a constant $n_1\geq C$, such that 
$$\mathbb{E}\left(\left. \1\{n_1\leq n<\eta_{C}\wedge\tau_{\epsilon}(C)\}\frac{Q^c_{n+1}}{Q^c_{n}} \right| \mathcal{F}_n \right) \leq 1,$$
which completes the proof.

\noindent \emph{Proof of (2)} As in the proof of (1), it suffices to show that for $c> \frac{(2+\beta)^2}{m_2}$ and some $n_2>C$,
\[
\Exp\left(\left. \1\{n_2\leq n<\eta_{C}\wedge\tau_{\epsilon}(C)\}\frac{U^c_{n+1}}{U^c_n} \right| \mathcal{F}_n \right) \leq 1.
\]
By Taylor's theorem we get
$$\Exp\left(\left. \1\{n<\eta_C\}\frac{U^c_{n+1}}{U^c_n} \right| \mathcal{F}_n \right)$$
$$=\1\{n<\eta_C\}\frac{n+1}{n}\left(
e^{\left( \frac{-c}{M(n)}\right)}(1-p_n) + p_ne^{\left(-c\tfrac{n+1}{M(n)+1} + \tfrac{cn}{M(n)}\right)}
\right).$$
In the same way as in the proof of (1), we get the equality
$$\Exp\left(\left. \1\{n<\eta_C\wedge\tau_{\epsilon}(C)\}\frac{U^c_{n+1}}{U^c_n} \right| \mathcal{F}_n \right) $$
$$=\1\{n<\eta_C\wedge\tau_{\epsilon}(C)\}
\left(1 + \frac{1}{n} - \frac{cm_2+o(1)}{(2+\beta)^2n} + O\left( \frac{1}{n^{1.001}} \right)\right),$$
which concludes the proof.
\end{proof}
Let us prove the main theorem for $\mathbb{E}d=2+\beta$.
\begin{proof}
Set $O^{\tau}_{\epsilon}=\{\tau_{\epsilon}(1) = \infty\}.$ Note that
\(
\lim_{\epsilon \to 0} \Pr(O^{\tau}_{\epsilon}) = 1.
\)

Recall that $D(C)=\{L(n)=1, \;\forall n\geq C\}.$
For $c<\frac{(2+\beta)^2}{m_2},$ by Lemma~\ref{lem:mgs} and Doob's theorem, there is some random $R_\epsilon$, so that
\[
\Pr\left(\left\{\sup_{n > 0} Q^c_n < R_{\epsilon} < \infty\right\}\cap O^{\tau}_{\epsilon}\cap D(C)\right)=\Pr\left(O^{\tau}_{\epsilon}\cap D(C)\right).
\]
Hence,
\[
\Pr\left(\left\{M(n) \geq \frac{cn}{\ln n + \ln R_\epsilon}\right\}\cap O^{\tau}_{\epsilon}\cap D(C)\right)=\Pr\left(O^{\tau}_{\epsilon}\cap D(C)\right),
\]
and so
\[
\Pr\left(\left\{\liminf_{n \to \infty} \frac{M(n) \ln n}{n} \geq c\right\}\cap O^{\tau}_{\epsilon}\cap D(C)\right)=\Pr\left(O^{\tau}_{\epsilon}\cap D(C)\right).
\]
Thus, we have that
\[
\Pr \left(
\bigl\{\liminf_{n \to \infty} \tfrac{M(n) \ln n}{n} \geq c\bigr\}
\cap O^{\tau}_{\epsilon}
\cap D(C)
\right)
= \Pr \left( O^{\tau}_{\epsilon} \cap D_{C}\right),
\]
and so, taking $\epsilon \to 0$ and $C \to \infty$, we have that
\[
\Pr \left(\liminf_{n \to \infty} \frac{M(n) \ln n}{n} \geq c\right)=1
\]
As this holds for any $c<\frac{(2+\beta)^2}{m_2},$ we conclude the desired lower bound.

For the upper bound, we apply the same technique, but for $U_n^c$ (instead of $Q_n^c$). We get
\[
\Pr\left(\left\{\sup_{n > 0} U^c_n < R_{\epsilon} < \infty\right\}\cap O^{\tau}_{\epsilon}\cap D(C)\right)=\Pr\left(O^{\tau}_{\epsilon}\cap D(C)\right).
\]
As above, the last equality implies
\[
\Pr \left(
\left\{\limsup_{n \to \infty} \tfrac{M(n) \ln n}{n} \leq c\right\}
\cap O^{\tau}_{\epsilon}
\cap D(C)
\right)
= \Pr \left( O^{\tau}_{\epsilon} \cap D(C) \right),
\]
and so taking $\epsilon \to 0$ and $C \to \infty$ we have that
\[
\Pr\left(\limsup_{n \to \infty} \frac{M(n) \ln n}{n} \leq c\right)=1.
\]
As this holds for any $c>\frac{(2+\beta)^2}{m_2},$ the proof is complete.
\end{proof}

\section{Number of vertices with fixed degree}
Let $N_k(n)$ denote the number of vertices with the degree $k$ in $P_n$, $k\in\N$. In this section, we prove a central limit theorem for the vector $(N_1(n),...,N_k(n))$ for any fixed $k\in\mathbb{N}$  
under the assumption $\E C^d<\infty$ for any $C>0$. We give the formal statement of the theorem in the final part of the section because it requires some heavy notations which are given below.

The dynamics of $N_k(n)$ depends on the degree of $Y_{n}$. Since the degree of new vertex always equals to one, the case $k=1$ would be different from the case $k>1$. If $\deg Y_n(n)=1$, then $N_2(n+1)=N_2(n)+1$ and $N_i(n+1)=N_i(n)$ for $i\neq 2$. If $\deg Y_n(n)=k>1$, then $N_{1}(n+1)=N_{1}(n)+1$ (we add vertex with degree $1$), $N_{k}(n+1)=N_k(n)-1$ and $N_{k+1}(n+1)=N_{k+1}(n)+1$ (we increase degree of the vertex with degree $k$). Also, $N_{l}(n+1)=N_l(n)$ for $l\neq 1,k,k+1$. Hence, for $k=1$ we get
\begin{equation}
\label{eq:N_1}
\begin{aligned}
&\{N_1(n+1)-N_1(n)=1|\F_n\}=\{\deg Y_n(n)\neq 1|\F_n\},\\
&\{N_1(n+1)-N_1(n)=0|\F_n\}=\{\deg Y_n(n)= 1|\F_n\}.
\end{aligned}
\end{equation}
For $k>1$ we get
\begin{equation}
\label{eq:N_k}
\begin{aligned}
&\{N_k(n+1)-N_k(n)=1|\F_n\}=\{\deg Y_n(n)= k-1|\F_n\},\\
&\{N_k(n+1)-N_k(n)=-1|\F_n\}=\{\deg Y_n(n)= k|\F_n\}.
\end{aligned}
\end{equation}
Hence,
\begin{equation}
\label{eq:exp_N_k}    
\begin{aligned}
&\E(N_{1}(n+1)\!-\!N_1(n)|\F_n)\!=\!1-\Pr(\deg Y_n(n)= 1|\F_n),\\
&\E\!(\!N_{k}(n+1)\!-\!N_k(n)|\F_n\!)\!=\!\Pr\!(\!\deg Y_n(n)\!=\! k\!-\!1\!)\!-\!\Pr\!(\!\deg Y_n(n)\!=\! k|\F_n\!),\;k>1.
\end{aligned}
\end{equation}
Let $\alpha_0(n)=0$ and for $k\geq 1$
$$\alpha_{k}(n)=\sum_{j=1}^{n+1}\1\{\deg v_j(n)\leq k\}\frac{\deg v_j(n)+\beta}{(2+\beta)n+\beta}=\sum_{j=1}^{k}\frac{N_j(n)(j+\beta)}{(2+\beta)n+\beta}.$$
Note that $\mathbb{P}(\deg X_n^1(n)\leq k)=\alpha_k(n)$. Hence
\begin{equation}
\label{eq:Y_deg}
\begin{aligned}
\Pr(\deg Y_n(n)\!=\!k|\F_n)\!&=\!\Pr\!\left(\!\left.\bigcap_{i=1}^{d_n}\left\{\deg X_n^i(n)\!\leq\! k\right\}\!\backslash\! \bigcap_{i=1}^{d_n}\left\{\deg X_n^i(n)\!\leq\! k\!-\!1\right\}\!\right|\!\F_n\!\right)\\
&=\!\sum_{m=1}^{\infty}\Pr(d=m)\left(\left(\alpha_{k}(n)\right)^m-\left(\alpha_{k-1}(n)\right)^m\right).
\end{aligned}
\end{equation}
Introduce functions
$$h_0=1,\;h_1(x_1)=\sum_{m=1}^{\infty}\Pr(d=m)\left(x_1\frac{1+\beta}{2+\beta}\right)^m,$$
$$h_k(x_1,...,x_k)\!=\!\sum_{m=1}^{\infty}\Pr(d=m)\!\left(\!\left(\!\sum_{j=1}^{k}x_{j}\frac{(j+\beta)}{2+\beta}\right)^m\!-\! \left(\sum_{j=1}^{k-1}x_{j}\frac{(j+\beta)}{2+\beta}\right)^m\!\right),\; k\geq 2,$$
$$f_k(x_1,...,x_k)=h_{k-1}(x_1,...,x_{k-1})-h_{k}(x_1,...,x_k),\quad k\in\N.$$
Since $\alpha_k(n)=\sum_{j=1}^{k}\frac{N_j(n)}{n+\frac{\beta}{2+\beta}}\frac{(j+\beta)}{2+\beta}$, from \eqref{eq:Y_deg} we get that for all $k$
\begin{equation}
\label{eq:Y_h_k}
\Pr(\deg Y_n(n)=k|\F_n) =h_k\left(\frac{N_1(n)}{n+\frac{\beta}{2+\beta}},...,\frac{N_1(n)}{n+\frac{\beta}{2+\beta}}\right).
\end{equation}
Therefore, from \eqref{eq:exp_N_k} we have
$$\E(N_k(n)-N_{k-1}(n)|\F_n) = f_k\left(\frac{N_1(n)}{n+\frac{\beta}{2+\beta}},...,\frac{N_1(n)}{n+\frac{\beta}{2+\beta}}\right).$$

Consider the sets 
$\Delta_k=[0,\infty)^k.$
Note that since $\E C^d<\infty$, the functions $f_i(x_1,...,x_k)$, $i=1,...,k$, are well defined and have continuous partial derivatives on $\Delta_k$.
Let $Z_k(n)=\frac{N_k(n)}{n}$ and $\tilde{Z}_k(n)=\frac{N_k(n)}{n+\frac{\beta}{2+\beta}}=Z_{k}(n)\frac{1}{1+\frac{\beta}{n(2+\beta)}}$. Note that $(\tilde{Z}_1(n)...,\tilde{Z}_k(n))\in\Delta_k$ and  $(Z_1(n)...,Z_k(n))\in\Delta_k$. For all $k$, 
$$\E\left(N_{k}(n+1)-N_k(n)|\F_n\right)= f_k\left(\tilde{Z}_1(n),...,\tilde{Z}_k(n)\right).$$
Note that
$$\tilde{Z}_k(n+1)-\tilde{Z}_k(n)=\frac{\left(n+\frac{\beta}{2+\beta}\right)(N_k(n+1)-N_k(n))-N_k(n)} {\left(n+\frac{\beta}{2+\beta}\right)\left(n+1+\frac{\beta}{2+\beta}\right)},$$
and, hence,
$$\E\left(\tilde{Z}_{k}(n+1)-\tilde{Z}_k(n)|\F_n\right) =\frac{f_k(\tilde{Z}_1(n),...,\tilde{Z}_k(n))-\tilde{Z}_k(n)}{n+1+\frac{\beta}{2+\beta}}.$$
Similary,
$$Z_{k}(n+1)-Z_k(n)=\frac{N_{k}(n+1)-N_k(n)}{n+1}-\frac{Z_k(n)}{n+1},$$
$$\E\left(Z_{k}(n+1)-Z_k(n)|\F_n\right) =\frac{f_k(Z_1(n),...,Z_k(n))-Z_k(n)}{n+1}$$ $$+\frac{f_k(\tilde{Z}_1(n),...,\tilde{Z}_k(n))-f_k(Z_1(n),...,Z_k(n))}{n+1}.$$
Let
$$g_{k}(x_1,...,x_k)=f_k(x_1,...,x_k)-x_k$$
and
$$G_k(x_1,...,x_k)=\left(g_{1}(x_1),...,g_{k}(x_1,...,x_k)\right)^{t}.$$
We further use these functions to build multidimensional stochastic approximation process.
Note that $\bigtriangledown G_k$ is triangular matrix, and hence its eigenvalues are
$$\lambda_i=\frac{\partial f_i}{\partial x_i}-1=-\sum_{m=1}^{\infty}m\Pr(d=m)\frac{i+\beta}{2+\beta} \left(\sum_{j=1}^{i}x_j\frac{j+\beta}{2+\beta}\right)^{m-1}-1.$$
Therefore, for any $k$ and $(x_1,...,x_k)\in\Delta_k$
$$-L_k=\max_{1\leq i\leq k}\lambda_i<-1.$$
Consider the systems $G_l(x_1,...,x_l)=0$, $l=1,...,k$. Let us prove that these systems have a unique solution in $\Delta_l$. First, consider the case $l=1$. Note that $f_1(x_1)$ is strictly decreasing in $\Delta_1$, $f(0)=1$. Therefore, there is a unique solution $x^{*}_1$ of the equation $g_1(x_1)=0$ in $\Delta_1$. Second, prove that if the system $G_{l-1}(x_1,...,x_{l-1})=0$ has a unique solution $\rho^{*}_{l-1}=(x^{*}_1,...,x^{*}_{l-1})$ in $\Delta_{l-1}$, then the system $G_{l}(x_1,...,x_{l})=0$ has a unique solution $\rho^{*}_{l}=(x^{*}_1,...,x^{*}_{l})$ in $\Delta_{l}$. 
Since $f_{l}(\rho^{*}_{l-1},x_l)$ is decreasing function of $x_l$, $f_{l}(\rho^{*}_{l-1},0)>0$ and $f_l(\rho^*_{l-1},x_l)\to-\infty$ as $x_l\to\infty$, there is a unique solution $x^*_l\in\Delta_l$ of the equation $g_{l}(\rho^*_{l-1},x_l)=0$.
Therefore, the system $G_k(x_1,...,x_k)=0$ has a unique solution $\rho_k^{\ast}=(x_1^{\ast},...,x_{k}^{\ast})$ in $\Delta_k$.
From now, let $\Delta_k=[0,M_k]^k$, where $M_k>0$ is such that $\{\rho^*_k\}\cup[0,2]^k\subset\Delta_k$.

Set 
$$\gamma_n=\frac{1}{n+1},\;\tilde{\gamma}_n=\frac{1}{n+1+\frac{\beta}{2+\beta}},$$
$$W_k(n)=(Z_1(n),...,Z_k(n))^{t},\;\tilde{W}_k(n)=(\tilde{Z}_1(n),...,\tilde{Z}_k(n))^{t}.$$
Then
$$W_{k}(n+1)-W_k(n)=\gamma_n\left(G_k(W_k(n))+E_k(n+1)+R_k(n+1)\right),$$
where 
$$R_k(n+1)=\left[f_1(\tilde{Z}_1(n))-f_1(Z_1(n)), ..., f_k(\tilde{Z}_1(n),...,\tilde{Z}_k(n))-f_k(Z_1(n),...,Z_k(n))\right]^t,$$ $$E_k(n+1)=(W_k(n+1)-\E(W_k(n+1)|\F_n))/\gamma_n$$
and
$$\tilde{W}_k(n+1)-\tilde{W}_k(n)=\tilde{\gamma}_n\left(G_k(\tilde{W}_k(n))+ (\tilde{W}_k(n+1) -\E(\tilde{W}_k(n+1)|\F_n))/\tilde{\gamma}_n\right).$$
Below, we prove that $\tilde{W}_k(n)$ satisfies the conditions of Theorem~1.4.26 from \cite{Duflo}. 
Note that the process $\tilde{W}_k(n)$ belongs to $\Delta_k$, so conditions would be written for $x\in\Delta_k$. The first condition becomes $(G_k(x))^t(x-\rho_k^{\ast})<0$ for any $x\in\Delta_k,$ $x\neq\rho_k^{\ast}$. This conditions is clearly satisfied since $\bigtriangledown G_k(x)$ has negative eigenvalues for all $x\in\Delta_k$. The conditions on $\tilde{\gamma}_n$ ($\sum \tilde{\gamma}_n=\infty$, $\sum \tilde{\gamma}^2_n<\infty$) hold for our choice of $\tilde{\gamma}_n$.   The last condition, obviously, holds, if
$$||G_k(\tilde{W}_k(n))||^2+ ||(\tilde{W}_k(n+1) -\E(\tilde{W}_k(n+1)|\F_n))/\tilde{\gamma}_n||^2<K\quad \text{a.s.}$$
for some constant $K>0$. Since $G_k(x)$ is bounded in $\Delta_k$, it is enough to show that an absolute value of each coordinate of $(\tilde{W}_k(n+1) -\E(\tilde{W}_k(n+1)|\F_n))/\tilde{\gamma}_n$ is bounded by a constant. For $1\leq i\leq k$,
$$\left(n+1+\frac{\beta}{2+\beta}\right)\left|\tilde{Z}_i(n+1)-\E(\tilde{Z}_i(n+1)|\F_n)\right|$$
$$\leq \left(n+1+\frac{\beta}{2+\beta}\right) \left(\left|\tilde{Z}_i(n+1)-\tilde{Z}_i(n)\right|+\left|\E(\tilde{Z}_i(n+1)-\tilde{Z}_i(n)|\F_n)\right|\right)$$
$$=\left|(N_k(n+1)-N_k(n))-\frac{N_k(n)} {\left(n+\frac{\beta}{2+\beta}\right)}\right|+\left|f_k(\tilde{Z}_1(n),...,\tilde{Z}_k(n))-\tilde{Z}_k(n)\right|$$
$$\leq\left|(N_k(n+1)-N_k(n))\right|+\left|\frac{N_k(n)} {\left(n+\frac{\beta}{2+\beta}\right)}\right|+ \left|f_k(\tilde{Z}_1(n),...,\tilde{Z}_k(n))\right|+\left|\tilde{Z}_k(n)\right|\leq 4+\epsilon$$
for any $\epsilon>0$ and $n$ large enough.
So we get that $\tilde{W}_{k}(n)\to \rho_k^{\ast}$ almost surely by Theorem~1.4.26 of \cite{Duflo}, and, hence, $W_{k}(n)\to \rho_k^{\ast}$ almost surely as well.

Now we check that assumptions of Theorem 2.1 of \cite{Fort} holds, which gives the convergence in distribution of $\gamma^{-1/2}(W_k(n)-\rho_k^{\ast})$ under the conditional probability $\Pr(\cdot|W_{k}(n)\to \rho_k^{\ast})$. Note that the condition could be removed since $\Pr(W_{k}(n)\to \rho_k^{\ast})=1$. Let us describe assumptions of this theorem for our model.

Assumptions C1(a,b,c) claim that $\rho^{\ast}$ belongs to interior of $\Delta_k$, $G_k(x)$ is twice continuously differentiable in a neighborhood of $\rho^{\ast}$ and $\bigtriangledown G_k(x)$ has negative eigenvalues. This assumptions holds as was discussed above with the largest eigenvalue $-L_k<-1$.

Assumptions C2(a,b,c) could be rewritten in the following way (we make the second part even stronger than needed): $\E(E_k(n+1)|\F_n)=0$, $||E_k(n+1)||<C$ for some constant $C>0$ and $\E(E_k(n+1)E_k^t(n+1)|\F_n)=U_k^{\ast}+o(1)$
for some symmetric positive definite (random) matrix $U_k^{\ast}$. We will check these assumptions later.

Assumption C3 claims that $\gamma_n^{-1/2} R_k(n+1)=O_{w.p.1}(1)o_{L^1}(1)$, which holds from the definition of  $R_k(n+1)$ since its components $f_i(x_1,...,x_i)$, $i=1,...,k$, have bounded derivatives in $\Delta_k$.

Assumption C4(b) states that $\sum \gamma_n=\infty$, $\sum \gamma^2_n<\infty$ and $\log(\gamma_{n-1}/\gamma_n)\sim \gamma_n/\gamma^{\ast}$ for some $\gamma^{\ast}>1/(2L_k)$. Since $L_k>1$, it holds with $\gamma_{\ast}=1$.

To check assumptions C2, note that for any $n$
$$||E_k(n+1)||\leq\sum_{i=1}^{k}(n+1)|Z_k(n+1)-\E(Z_k(n+1)|\F_n)|$$
$$\leq(n+1)\sum_{i=1}^{k}\left(|Z_k(n+1)-Z_k(n)|+|\E(Z_k(n+1)-Z_k(n)|\F_n)|\right)$$
$$=\sum_{i=1}^{k}\left(|N_{k}(n+1)-N_k(n)|+|Z_k(n)| +|f_k(\tilde{Z}_1(n),...,\tilde{Z}_k(n))-Z_k(n)|\right)\leq 4k.$$
Now it remains to check that 
$$\E(E_k(n+1)E_k^t(n+1)|\F_n)=U_k^{\ast}+o(1)$$
for some symmetric positive definite (random) matrix $U_k^{\ast}$. Since components of $E_k(n+1)$ is not linearly dependent, $\E(E_k(n+1)E_k^t(n+1)|\F_n)$ is positive definite. So we have to prove that it can be decomposed as $U_k^{\ast}+o(1)$ where $U_k^{\ast}$ does not depend on $n$.
Note that from definition of $E_k(n+1)$ we have 
$$E_k(n+1)=(N_1(n+1)-\E(N_1(n+1)|\F_n),...,N_k(n+1)-\E(N_k(n+1)|\F_n))^t$$
$$=[(N_i(n+1)-N_i(n))-\E(N_i(n+1)-N_i(n)|\F_n)]_{1\leq i\leq k}^t.$$
Therefore,
$$\E(E_k(n+1)E_k^t(n+1)|\F_n)$$
$$=\E\left([(N_i(n+1)-N_i(n)-\E(N_i(n+1)-N_i(n)|\F_n))\right.$$
$$\times\left.(N_j(n+1)-N_j(n)-\E(N_j(n+1)-N_j(n)|\F_n))]_{1\leq i,j\leq k}|\F_n\right)$$
$$=[\E((N_i(n+1)-N_i(n))(N_j(n+1)-N_j(n))|\F_n)$$
$$-\E(N_i(n+1)-N_i(n)|\F_n)\E(N_j(n+1)-N_j(n))|\F_n]_{1\leq i,j\leq k}.$$ 
Note that
$$\E(N_i(n+1)-N_i(n)|\F_n)=f_i(\tilde{Z}_1(n),...,\tilde{Z}_i(n)) =f_i(x_1^{\ast},...,x_i^{\ast})+o(1)\quad \text{a.s.}$$
To calculate the expectations $\E((N_i(n+1)-N_i(n))(N_j(n+1)-N_j(n))|\F_n)$ we use formula \eqref{eq:N_1} and \eqref{eq:N_k}. Let $i\leq j$. As in calculation of $\Pr(N_i(n+1)-N_i(n)=1|\F_n)$, the case $i=1$ different from the case $i>1$. For $i=1$ we get that 
$$\E((N_1(n+1)-N_1(n))(N_j(n+1)-N_j(n))|\F_n)=\Pr(\deg Y_n(n)=j-1|\F_n)$$
$$-\Pr(\deg Y_n(n)=j|\F_n) \text{ if }j>2,$$
$$\E((N_1(n+1)-N_1(n))(N_j(n+1)-N_j(n))|\F_n)=0 \text{ if }j=2,$$
$$\E((N_1(n+1)-N_1(n))(N_j(n+1)-N_j(n))|\F_n)=\Pr(\deg Y_n(n)\neq 1|\F_n)\text{ if }j=1.$$
For $i>1$ we get that
$$\E((N_i(n+1)-N_i(n))(N_j(n+1)-N_j(n))|\F_n)=-\Pr(\deg Y_n(n)=i|\F_n) \text{ if }i=j-1,$$
$$\E((N_i(n+1)-N_i(n))(N_j(n+1)-N_j(n))|\F_n)=0 \text{ if }i<j-1,$$
$$\E((N_i(n+1)-N_i(n))(N_j(n+1)-N_j(n))|\F_n)=\Pr(\deg Y_n(n)=i-1|\F_n)$$
$$+\Pr(\deg Y_n(n)=i|\F_n)\text{ if }i=j.$$

Introduce functions
$$ 
f^k_{i,j}(x_1,...,x_k)=$$
$$
=\begin{cases}
f_1(x_1)&\;i=j=1;\\
f_j(x_1,...,x_j)& \;i=1,j>2;\\
\sum_{m=1}^{\infty}\Pr(d=m)\left(\left(\sum_{l=1}^{i}x_{l}\frac{(l+\beta)}{2+\beta}\right)^m- \left(\sum_{l=1}^{i-1}x_{l}\frac{(l+\beta)}{2+\beta}\right)^m\right)&\;i=j-1>1;\\
\sum_{m=1}^{\infty}\Pr(d=m)\left(\left(\sum_{l=1}^{i}x_{l}\frac{(l+\beta)}{2+\beta}\right)^m- \left(\sum_{l=1}^{i-2}x_{l}\frac{(l+\beta)}{2+\beta}\right)^m\right)&\;i=j>2;\\
\sum_{m=1}^{\infty}\Pr(d=m)\left(\frac{x_1(1+\beta)+x_2(2+\beta)}{2+\beta}\right)^m&\;i=j=2;\\
0&\text{ otherwise.}
\end{cases}
$$
For $i>j$ put $f^k_{i,j}=f^k_{j,i}$. 
From \eqref{eq:Y_h_k} and definition of $\tilde{Z}_k(n)$ we get 
$$\E((N_i(n+1)-N_i(n))(N_j(n+1)-N_j(n))|\F_n) =f^k_{i,j}(\tilde{Z}_1(n),...,\tilde{Z}_k(n)).$$ 
Note that since $\E C^d<\infty$ for any $C>0$, functions $f^k_{i,j}$ have bounded derivatives in $\Delta_k$. Hence,
$$\E((N_i(n+1)-N_i(n))(N_j(n+1)-N_j(n))|\F_n) f^k_{i,j}(x_1^{\ast},...,x_k^{\ast})+o(1)\quad \text{a.s.}$$
Therefore, if we define $U_k^{\ast}$ as
$$U_k^{\ast}=[f^k_{i,j}(x_1^{\ast},...,x_k^{\ast}) -f_i(x_1^{\ast},...,x_i^{\ast})f_j(x_1^{\ast},...,x_j^{\ast})]_{1\leq i,j\leq k},$$
we would get that
$$\E(E_k(n+1)E_k^t(n+1)|\F_n)=U_k^{\ast}+o(1).$$
Hence, we could apply Theorem 2.1 of \cite{Fort}. So we proved the following theorem.
\begin{theorem}
\label{thm:fixed_degree}
Let $\E C^d<\infty$ for any $C>0$. Then, under above notations, $\gamma_n^{-1/2}(W_k(n)-\rho_k^{\ast})$ converges in distribution to a random vector with a characteristic function given for any $x\in\R^k$ by
$$\exp\{-\frac{1}{2}x^tVx\},$$
where matrix $V$ is the positive definite matrix satisfying
$$V(I+2\gamma^{\ast}\triangledown G_k(\rho_k^{\ast})^t)+(I+2\gamma^{\ast}\triangledown G_k(\rho_k^{\ast}))V=-2\gamma^{\ast}U_k^{\ast}.$$
\end{theorem}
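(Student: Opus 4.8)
The plan is to obtain Theorem~\ref{thm:fixed_degree} as the output of a multidimensional stochastic approximation analysis, assembling it from three ingredients that have essentially been prepared above: (i) the a.s.\ convergence $W_k(n)\to\rho_k^{\ast}$, (ii) the verification of the structural and moment hypotheses of a CLT for stochastic approximation, and (iii) the identification of the limiting covariance via a Lyapunov equation. Concretely, I would first record the stochastic-approximation decomposition
$$W_k(n+1)-W_k(n)=\gamma_n\bigl(G_k(W_k(n))+E_k(n+1)+R_k(n+1)\bigr),\qquad \gamma_n=\tfrac{1}{n+1},$$
where $E_k(n+1)$ is the centered one-step increment and $R_k(n+1)$ collects the discrepancy between evaluating the $f_i$ at $W_k(n)$ and at $\tilde W_k(n)$. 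The martingale-difference property $\E(E_k(n+1)\mid\F_n)=0$ is immediate from the definition; since every coordinate of $W_k$ moves by $O(1/n)$ per step one gets the uniform bound $\|E_k(n+1)\|\le 4k$, and from the Lipschitz continuity of the $f_i$ on $\Delta_k$ one gets $\|R_k(n+1)\|=O(1/n)$, hence $\gamma_n^{-1/2}\|R_k(n+1)\|\to0$.

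The second step is the a.s.\ convergence $\tilde W_k(n)\to\rho_k^{\ast}$ (and hence $W_k(n)\to\rho_k^{\ast}$). Here I would prove by induction on $l$ that $G_l(x_1,\dots,x_l)=0$ has a unique zero in $\Delta_l$, exploiting that $\triangledown G_k$ is lower triangular with diagonal entries $\lambda_i=\partial f_i/\partial x_i-1<-1$: given the unique $\rho_{l-1}^{\ast}$, the map $x_l\mapsto f_l(\rho_{l-1}^{\ast},x_l)$ is strictly decreasing, positive at $x_l=0$, and $\to-\infty$. The negativity of all eigenvalues of $\triangledown G_k$ on $\Delta_k$ yields the Lyapunov inequality $G_k(x)^t(x-\rho_k^{\ast})<0$ for $x\neq\rho_k^{\ast}$; combined with $\sum\tilde\gamma_n=\infty$, $\sum\tilde\gamma_n^2<\infty$, and the increment bound, this lets me apply Theorem~1.4.26 of~\cite{Duflo} to $\tilde W_k(n)$. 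At this point I also enlarge $\Delta_k$ to $[0,M_k]^k$ with $M_k$ chosen so that $\rho_k^{\ast}$ lies in its interior, which will be needed for the CLT.

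The third step is to check the hypotheses C1--C4 of Theorem~2.1 of~\cite{Fort} and read off the conclusion. C1 (interior zero, $C^2$ drift, $\triangledown G_k(\rho_k^{\ast})$ with eigenvalues of negative real part, the largest being $-L_k<-1$) and C4(b) (with step size $\gamma_n=1/(n+1)$ and $\gamma^{\ast}=1>1/(2L_k)$) are structural and follow from the above; C3 is the $R_k$ bound already noted. The substantive assumption is C2, the convergence of the conditional covariance. Using the one-step laws \eqref{eq:N_1}--\eqref{eq:N_k} I would compute $\E\bigl((N_i(n+1)-N_i(n))(N_j(n+1)-N_j(n))\mid\F_n\bigr)$ case by case in $i\le j$ (the rows $i=1$, $i=j$, $i=j-1$, the special value $i=j=2$, and zero otherwise), express each as a function $f^k_{i,j}$ of $\tilde Z_1(n),\dots,\tilde Z_k(n)$, and pass to the limit using $\tilde W_k(n)\to\rho_k^{\ast}$ a.s.\ and the boundedness of the derivatives of $f^k_{i,j}$ on $\Delta_k$, obtaining $\E(E_k(n+1)E_k^t(n+1)\mid\F_n)=U_k^{\ast}+o(1)$ with $U_k^{\ast}=[f^k_{i,j}(\rho_k^{\ast})-f_i(\rho_k^{\ast})f_j(\rho_k^{\ast})]_{1\le i,j\le k}$, which is positive definite because the increment coordinates are not linearly dependent. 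Theorem~2.1 of~\cite{Fort} then gives that $\gamma_n^{-1/2}(W_k(n)-\rho_k^{\ast})$ converges in distribution to a centered Gaussian whose covariance $V$ solves $V(I+2\gamma^{\ast}\triangledown G_k(\rho_k^{\ast})^t)+(I+2\gamma^{\ast}\triangledown G_k(\rho_k^{\ast}))V=-2\gamma^{\ast}U_k^{\ast}$; the conditioning on $\{W_k(n)\to\rho_k^{\ast}\}$ present in that theorem is vacuous here since this event has probability one.

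The main obstacle I expect is the bookkeeping behind C2: getting every case of the pairwise-increment covariance right, confirming that the series defining $f^k_{i,j}$ and its partial derivatives converge on all of $\Delta_k=[0,M_k]^k$ rather than merely on the unit cube (this is exactly where the hypothesis $\E C^d<\infty$ for every $C>0$ enters, since the arguments $\sum_l x_l(l+\beta)/(2+\beta)$ are not confined to $[0,1]$), and verifying that the resulting $U_k^{\ast}$ is genuinely positive definite and not only positive semidefinite. The remainder is a fairly mechanical verification of the standing hypotheses of the two cited stochastic-approximation theorems.
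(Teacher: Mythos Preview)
Your proposal is correct and follows essentially the same route as the paper: the same stochastic-approximation decomposition with $E_k$ and $R_k$, the inductive uniqueness of $\rho_k^{\ast}$, a.s.\ convergence via Theorem~1.4.26 of \cite{Duflo}, and then verification of C1--C4 of Theorem~2.1 of \cite{Fort} with the same case-by-case computation of $U_k^{\ast}$. Your remark that the hypothesis $\E C^d<\infty$ is needed precisely to control the series defining $f^k_{i,j}$ and their derivatives on the enlarged cube $[0,M_k]^k$ is exactly the point the paper uses as well.
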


\section{Discussion}

In Section 4, we use a stochastic approximation technique in order to simplify the proof of Lemma~\ref{lem:zero_set}. This lemma could be proved by the martingale technique only (which is later used in the cases $\E d<2+\beta$ and $\E d=2+\beta$). In the one-dimensional case, the stochastic approximation result we use could be obtained from convergence theorems for martingales. In two last cases, usage of the stochastic approximation does not provide many benefits, although it probably can be applied as well. 

Note that conditions on $d$ in Theorem~\ref{thm:max_degree} are needed only in the case $\E d\leq 2+\beta$ and could be lightened after a more detailed analysis of the expression on the right-hand side of (\ref{p_n}).

In Section 5, we use a multidimensional stochastic approximation (due to a recursive dependence of $N_{i}(n)$). A similar technique could be used to get a central limit theorem for the number of vertices of the fixed degree for the min-choice model (see \cite{MP13}) and, probably, to its extension to meek choice in \cite{HJ16}.\\

Note that the sequence $(x_1^{\ast},x_2^{\ast},...)$, which is defined in Section 5, defines a limit of the empirical degree distribution. Variations of $\beta$ and the distribution of $d$ would result in different limits. It could be interesting to consider reverse problem, which is to try to reconstruct $\beta$ and $d$ from a given sequence $(x^*_1,x^*_2,...)$, that satisfy some conditions (with basic conditions $0<x^*_{i+1}<x^*_i$, $\sum_{i=1}^{\infty}x^*_i= 1$ and $\sum_{i=1}^{\infty}x^*_i(i+\beta)\leq 2+\beta$ for some $\beta>-1$). 

It could be easily shown that $\sum_{i=1}^{\infty}x^*_i= 1$, but $\sum_{i=1}^{\infty}x^*_i(i+\beta)$ may be less than $2+\beta$. In fact, if $\E d>2+\beta$, $\sum_{i=1}^{k}x^*_i(i+\beta)\leq 2+\beta-x^*$ for all $k$. Furthermore, one could consider the sum $S_{k}=\sum_{i=1}^{k}x^*_i\frac{i+\beta}{2+\beta}$ (similar to \cite{HJ16}). Since it is increasing and bounded from above by $1$, there is a limit $S_{\ast}=\lim_{k\to\infty}S_k$. Similar to \cite{HJ16}, one could ask is $S_{\ast}=1$ if $\E d\leq 2+\beta$ and $S_{\ast}=1-x^{\ast}/(2+\beta)$ if $\E d> 2+\beta$, or there is some additional weight loss.

Moreover, the reverse problem could be reduced to finding a nonnegative solution of infinite system
$$\sum_{m=1}^{\infty}S_{k}^{m}p_m=k-\sum_{i=1}^k(k+1-i)x_{i}^{\ast}\quad k\in\N$$ 
of equations on $p_m$, where $p_m=\mathbb{P}(d=m)$. Note that this system has Vandermond matrix, so one could suggest that there is a solution (if $S_i\neq S_j$, $i\neq j$, which equals to condition $x_i^{\ast}>0$ for all $i\in\N$), but it could be not a nonnegative one. It also could be interesting to see if some types of asymptotic behaviour of $x_{i}^{\ast}$ (e.g., $x_{i}^{\ast}\sim\frac{c}{i^{\gamma}}$ for $1<\gamma<2$) would result in absence of a nonnegative solution.

Theorem~\ref{thm:fixed_degree} provides the central limit theorem for any fixed $k$. It could be interesting to consider $k$ depending on $n$ and study an asymptotic behaviour of $N_k(n)$. This study can be motivated by the fact that  $x_k^{\ast}$ decays quite fast with $k$, and so does the smallest eigenvalue of $V$, which reduces an efficiency of the central limit theorem. \\

Theorem~\ref{thm:max_degree} gives us an asymptotic of the maximal degree of the graph. For the subcritical case ($\mathbb{E}d<2+\beta$), it behaves similarly to the maximal degree in the Mori's preferential attachment model. Therefore, it is interesting to compare the respective degree distributions. In contrast to the standard preferential attachment model, in max-choice model, the probability to increase a degree of a fixed vertex depends not only on the degree, but also on its position in the joint degree distribution. Indeed, let
$$A(m,n)=\sum_{j=1}^{n+1}(\deg v_j(n)+\beta)\1\{\deg v_j(n)>m\},$$
$$A_i(n)=A(\deg v_i(n),n),$$
$$B(m,n)=\sum_{j=1}^{n+1}(\deg v_j(n)+\beta)\1\{\deg v_j(n)=m\},$$
$$B_i(n)=B(\deg v_i(n),n),$$
$$\alpha(x,n)=\sum_{k=1}^{\infty}\mathbb{P}(d=k)\left(1-\frac{x}{(2+\beta)n+\beta}\right)^k,$$
$$a(m,n)=\alpha(A(m,n),n)-\alpha(A(m,n)+B(m,n),n),$$
where $a(m,n)$ is the probability to increase a degree of a vertex with the degree $m>1$ at the step $n$. Therefore, for the vertex $v_i$, $i\leq n$, the probability $p_i(n)$ to increase its degree at the step $n$ is
$$p_i(n)= \sum_{l=1}^{\infty}
\1\{B_i(n)=l(\deg v_i(n)+\beta)\}\frac{a(\deg v_i(n),n)}{l}$$
$$=\sum_{l=1}^{\infty}
\1\{B_i(n)=l(\deg v_i(n)+\beta)\}\frac{a(\deg v_i(n),n)(\deg v_{i}(n)+\beta)}{B_i(n)}$$
$$=(\deg v_i(n)+\beta)\sum_{k=1}^{\infty}\frac{\mathbb{P}(d=k)}{B_i(n)}\left(\left(1-\frac{A_i(n)}{(2+\beta)n+\beta}\right)^k -\left(1-\frac{A_i(n)+B_i(n)}{(2+\beta)n+\beta}\right)^k\right)$$
$$=\frac{\deg v_i(n)+\beta}{(2+\beta)n+\beta}\sum_{k=1}^{\infty} \mathbb{P}(d=k)\sum_{j=0}^{k-1}\left(1-\frac{A_i(n)}{(2+\beta)n+\beta}\right)^j \left(1-\frac{A_i(n)+B_i(n)}{(2+\beta)n+\beta}\right)^{k-j-1}.$$
The multiplier in front of the sum in the last line equals to the probability to increase the degree of the vertex in Mori's preferential attachment model without choice, while the sum depends on a position in the joint degree distribution. It would be interesting to get some analogue of the power-law for the degree distribution (may be with a non-constant but bounded exponent).
\\

We consider only trees, but similar results should hold for standard generalizations of the model with more edges (e.g., $m$ edges are drawn at each step, $m$ could be random). Such generalizations are of interest because their graph properties are more natural. For example, commonly, the diameter of the model is close to the diameter of a real network for $m\geq 2$ (see, e.g., \cite{barabasi}, \cite{BR04}). To achieve such an effect in our model, one could consider models in which the choice is made with respect to some other criteria. For example, coin toss among vertices with the same degree could be replaced with a choice that minimizes a distance between the new vertex and the vertex with the highest degree.

\section*{Acknowledgements.}
The author is grateful to Professor Itai Benjamini for proposing the model and to Professor Maksim Zhukovskii for helpful discussions.
\bibliographystyle{alpha}
\bibliography{random number of choices_revised}

\end{document}